\newtheorem{thm}{Theorem}[section]
\newtheorem{prop}[thm]{Proposition}
\theoremstyle{definition}
\numberwithin{equation}{section}
\newcommand{\R}{\mathbb{R}}
\newcommand\Tstrut{\rule{0pt}{2.6ex}}         
\newcommand\Bstrut{\rule[-0.9ex]{0pt}{0pt}}   
\begin{document}

\title{Adaptive Superresolution in Deconvolution of Sparse Peaks}

\author{Alexandra~Koulouri,  Pia~Heins
        and~Martin~Burger
\thanks{A. Koulouri  was with the Faculty of Information Technology and Communication Sciences,
Tampere University, P.O. Box 692, 33101 Tampere, FI,  e-mail: alexandra.koulouri@tuni.fi.}
\thanks{Pia Heins was with Faculty of Electrical Engineering and Information Technology, University of Applied Sciences and Arts Hannover, Ricklinger Stadtweg 120, 30459 Hannover, DE e-mail: pia.heins@hs-hannover.de
}
\thanks{Martin Burger was with the Department of Mathematics,
Universit\"at Erlangen-N\"urnberg, Cauerstrasse 11, 91058 Erlangen,
DE, e-mail: martin.burger@fau.de}
}

%

\maketitle

\begin{abstract}
The aim of this paper is to investigate superresolution in
deconvolution driven by sparsity priors. The observed signal is a
convolution of an original signal with a continuous kernel. With the
prior knowledge that the original signal can be considered as a
sparse combination of Dirac delta peaks, we seek to estimate the
positions and amplitudes of these peaks by solving a finite
dimensional convex problem on a computational grid. Because, the
support of the original signal may or may not be on this grid, by
studying the discrete de-convolution of sparse peaks using
$\ell_1$-norm sparsity prior,
we confirm recent observations that canonically the discrete
reconstructions will result in multiple peaks at grid points
adjacent to the location of the true peak. Owning to the complexity
of this problem, we analyse carefully the de-convolution of single
peaks on a grid and gain a strong insight about the dependence of
the reconstructed magnitudes on the exact peak location. This in
turn allows us to infer further information on recovering the
location of the exact peaks i.e. to perform super-resolution. We
analyze in detail the possible cases
that can appear and 
based on our theoretical findings, we propose an self-driven
adaptive grid approach that allows to perform superresolution in
one-dimensional and multi-dimensional spaces. 
With the view that the current study can provide a further step in
the development of more robust algorithms for the detection of
single molecules in fluorescence microscopy or identification of
characteristic frequencies in spectral analysis, we demonstrate how
the proposed approach can recover sparse signals using simulated
clusters of point sources (peaks)
of low-resolution in one and two dimensional spaces. 

\end{abstract}

\begin{IEEEkeywords}
Deconvolution, superresolution, sparsity, $\ell_1$-norm prior, LASSO
problem, first order optimality condition, grid, discetization,
node, element, smooth and symmetric kernel.
 \end{IEEEkeywords}


\section{Introduction}

\subsection{Deconvolution of sparse peaks on discrete  grids}
In a wide range of imaging applications, the signal of interest
comprises a sequence of sparse peaks (or point sources) for instance
in fluoresce microscopy \cite{Jones2011,Falcon,Small2014}, astronomy
\cite{Li2018}, ultrasound or Doppler technology
\cite{Tur2011,Wagner2012,Bar-Ilan2014,Bendory2016a}, medical imaging
\cite{Tournier2004,PIA2015} and  computational neuroscience
\cite{Ekanadham2014}. In these applications, one has frequently to
solve the problem that the signal of interest cannot be observed
directly, but has to be inferred from other quantities, often low
spatial resolution observations which mathematically can be
described as the convolution of the
original signal with a smooth kernel.

In this article, we study the superresolution problem, known as
sparse peak deconvolution \cite{Duval2015b}, where one seeks to
estimate the positions and amplitudes of the underlying sparse peaks
from a set of blurred observations. As idealized data we consider
the convolution of a measure $\mu$ on $\Omega\subseteq \mathbb{R}^d$
with a known symmetric and smooth (with infinite support) kernel $G$
which attains its maximum at $0$, i.e.
\begin{equation}\label{eq:convolution}
    f(x) = (G*\mu)(x):=\int_{\Omega} G(x-y)~d\mu(y)\;.
\end{equation}
Here we consider the convolution operator from ${\cal M}(\Omega)$ to
$L^2(\Omega)$, which is well-defined by the Fourier convolution
theorem  (cf. \cite{champeney1989handbook}) and 
we are interested in the reconstruction of sparse peaks when the
corresponding original signal is of the form
\begin{equation}\label{eq:originalSignal}
    \mu(x)= \sum_{l=1}^L \gamma_l \delta_{\xi_l}\;,
\end{equation}
where $L$ is the total number of peaks and
$\delta_{\xi_l}=\delta(x-\xi_l)$ denotes a concentrated measure
(expressed through the Dirac-delta function $\delta$) at location
$\xi_l:\Omega\rightarrow \R^d$ with amplitude $\gamma_l$.

In order to obtain a sparse reconstruction it is nowadays standard
to employ the well-established $\ell_1$-norm minimization approaches
(also known as Basic Pursuit or LASSO)
\cite{Donoho1992,Tibshirani1996,Chen1998}
which, in addition to sparse promoting solutions, allow the
linearization of the original problem, the direct application of
fast convex optimization solvers (e.g. \cite{Grant2014}) and do not
require application of Fourier transform\cite{Candes2014}. So,
instead of solving a continuous deconvolution problem
\cite{Decastro2012,Candes2013,Bredies2013,Candes2014}, the aim is to
reconstruct $\mu$
via a discrete set of concentrated measures, i.e 
to look for a discrete solution of the form
\begin{equation}\label{eq:numericalSolution}
    \mu^N(x) = \sum_{k=1}^N c_k \delta_{x_k}\;,
\end{equation}
where $ c=\{c_k\}_{k=1,\ldots,N}\in\mathbb{R}^N$ is a vector that
contains the numerically estimated amplitudes (weights) at a set of
grid points $\{x_k\}_{k=1,\ldots,N}$.
With the discretization of the computational domain, the convolution
can be written as an operator acting on the coefficients $
c=\{c_k\}_{k=1,\ldots,N}$, i.e.
\begin{equation}
    G*\mu^N  = Ac = \sum_{k=1}^N c_k G(x-x_k)\;,
\end{equation}
where $A:\R^N \rightarrow L^2(\Omega)$. The $\ell_1$-norm minimization problem is
\begin{equation}\label{eq:deconvolutionProblem}
    \min_{c \in \R^N} \ J(c):=\frac{1}{2}\|Ac-f\|^2+\lambda \|c\|_1\;.
\end{equation}
Since the support of signal $\mu$ \eqref{eq:originalSignal} may
or may not be on the computational grid  $\{x_k\}_{k=1,\ldots,N}$,
three chief questions for the $\ell_1$-norm estimates arise:
\begin{itemize}
  \item How the error between the original signal $\mu$ and discrete signal $\mu^N$
is quantified based on the discretization?
 \item  What are the expected patterns of the discrete estimation $\mu^N$ on
an arbitrary grid?
\item Can  the locations and amplitudes of the
  original signal $\mu$ be approximated with the help of the
solution
$\mu^N$?
\end{itemize}
In this article, we investigate and answer these questions with the
help of convex optimization theory and standard numerical analysis.
We anticipate that the understanding of the effects of sparsity
promoting solvers when the computational grid and the support of the
original sparse signal do not coincide will allow the development of
more robust algorithms required in applications such as fluorescence microscopy 
\cite{Elson2002,Sage2015,Rust2006,Hess2006,Gould2009,Small2014,Holden2011,ThunderSTORM2014}.

\subsection{Related works}
Sparsity prior driven deconvolution approaches in  continuous
domains have been  studied in several works including
\cite{Decastro2012,Candes2013,Bredies2013,Duval2015,Fernandez-Granda2016}.
The signal (sum of Dirac functions) to be recovered is not a
finite-dimensional vector (as in \eqref{eq:numericalSolution}) but
 a Radon measure and 
the minimization problem is formulated with the help of the total
variation (TV) term and this problem is referred to as
Beurling-Lasso (BLASSO) \cite{Decastro2012,Duval2015,Poon2019}. An
extensive theoretical analysis of BLASSO in the case of one
dimensional Fourier measurements was provided in \cite{Candes2014}.
Particularly, it was shown that if the spikes are separated enough,
then the exact recovery is possible (when the fraction of the
measurement noise and regularization parameter tends to zero).
Robustness to noise under this separation condition was studied by
\cite{Azais2015,Duval2015}, while  the effect of the positivity
constraint was analyzed in
\cite{Bendory2016,Morgenshtern2016,Denoyelle2016,Eftekhari2018a,Eftekhari2018}.
The observation sampling and the exact support recovery was
theoretically studied in \cite{Duval2017}. In \cite{Poon2019}, the
BLASSO problem was analyzed for measures in higher dimensional
spaces revealing that the kernel and arrangement of the original
peaks affect the stability in the estimates.

BLASSO is a convex but infinite dimensional optimization problem. As
shown in
\cite{Candes2014,Bhaskar2013,Tang2013,Bendory2015a,Fernandez-Granda2016},
solvers exist for ideal lowpass filters (i.e. Dirichlet type of
kernels) when the observations are transformed into the Fourier
domain and consider a finite number of frequencies in one
dimensional signal spaces. Particularly, in theses cases, the primal
(BLASSO) problem is expressed via its (Fenchel-Rockafellar)
associated finite dimensional dual problem, which for the numerical
computations is encoded as a finite semi-definite program (SDP)
\cite{Candes2014,Castro2017}. The core of these approaches rely on
the duality between peak locations and the existence of an
interpolating trigonometric polynomial (often referred to as dual
certificate) in the measurement (dual) space (which is bounded by 1
in magnitude at locations indicating the underlying peaks
\cite{Candes2006}). However,
apart from  the one dimensional spaces (line and torus), there is
not a canonical extension or exact SPD formulation in higher
dimensional
spaces.
We refer for instance to \cite{Castro2017} (and the references
therein) for  relaxed SDP versions in higher dimensions.

In arbitrary spaces and for general kernels, one has to approximate
the BLASSO problem by first introducing discrete grids and then
solve a finite
dimensional minimization problem (e.g. LASSO or basic pursuit). 
Several authors have proposed approximation or non-convex
optimization steps to be included in the standard LASSO to recover
the exact locations and amplitudes. In particular, in
\cite{Ekanadham2011,Duval2015b}, the continuous basis-pursuit which
involves a first order Taylor approximation of the kernel in the
fidelity term of the minimization problem accompanied by the
$\ell_1$-norm regularization term have been utilized to improve the
accuracy in the peak localization. However, multiple peaks around
the original peak is a common result (as also theoretically
justified in \cite{Duval2015b}). More robust iterative approaches
 using the Frank-Wolfe algorithm (also known as conditional gradient method) has
been proposed in \cite{Bredies2013,Boyd2017}. These include an
alternation between two steps. In the first step, the computational
support is renewed by generating a new peak location using the
conditional gradient method  and then a non-convex step follows
where only the locations and amplitudes are computed while the
number of peak locations stays fixed.

Even though the current paper is focusing on superresolution using
convex optimization methods and especially the $\ell_1$-norm
regularization, we would like to mention that  there is also a vast
literature on spectral superresolution algorithms that rely on Prony's
concept (for a general review see \cite{Krim1996,Stoica2005}), for
example MUSIC \cite{Schmidt1986}, ESPRIT \cite{Roy1989}) or pencil
method \cite{Hua1990}.
These methods perform well in noiseless setting and do not require a
minimum separation condition to fully recover positive and negative
peaks; however they rely strongly on the signal, noise and
measurement modelling and their extension to higher dimension is not
trivial  see e.g.
\cite{Liao2015,Peter2015,Kunis2016,Andersson2018,Li2019,Diederichs2018}.
%
\subsection{Contributions}
In this work, the aim is to find new connections between the
super-resolution algorithms which impose sparsity assumptions on the
signal to be recovered \cite{Small2014} and theoretical studies
(e.g. \cite{Morgenshtern2016,Bendory2017,Duval2015b}) which have
been developed rather separately so far.
To that end, we first explain  how convex optimization techniques
\cite{Boyd2004,Hindi2006,Boyd2011} and, more precisely, $\ell_1$-
norm sparsity constraints affect the solution of such inverse
problems as the deconvolution of sparse peaks (or point sources) on
discrete grids (or meshes) when the convolution kernel is smooth
(admissible) \cite{Bendory2016} and then we propose an adaptive
super-resolution scheme. In particular, our contributions are two-fold and are summarized as follows:
\subsubsection{Theoretical}
\begin{itemize}
 \item With the help of the first order optimality condition of the
$\ell_1$-norm minimization problem, we show that the numerical
solution consists of one or multiple peaks at grid points (or nodes)
adjacent to the location of the actual peak. Our conclusions are
inline with recent results presented in \cite{Duval2015b} for one
dimensional spaces; but, the methodology employed, as well as the
form in which the problems in question are expressed, are different.
Previous approaches study  the properties of the $\ell_1$-norm
numerical solution by
 introducing the extended computational support notion \cite{Duval2015b} or by
 deriving
dual certificates that fulfils particular properties \cite{Poon2018}
which both were used nicely to obtain asymptotic properties of the
signal support. Here, we take a step forward to characterize the
values of the numerical solution on its support also in dependence
of the exact peak locations.
We use the optimality
condition of the finite dimensional $\ell_1$-norm minimization problem to
investigate the conditions under which a single or multiple peaks
are recovered in one dimensional spaces. Then, we define an
optimality curve, directly related to the optimality condition of
the problem (respectively the dual certificate), whose shape allows
us not only to justify the patterns of the expected numerical
solutions on fixed computational grids both in one and higher
dimensional spaces, but also infer further information on the
location of the exact peaks.
\item We show that we can explicitly approximate
 the locations and amplitudes of the exact peaks based on a set of linear
equations
 derived from the associated normal equations of the $\ell_1$-norm
 problem.
  \item We derive an a-posterior error between
the original signal \eqref{eq:originalSignal} and its discrete
version \eqref{eq:numericalSolution} by employing the Bregman
distance \cite{Burger2016}. We show that the recovery error depends on the relative distance
between the computational
grid points and the locations of the original peaks.
\end{itemize}
\subsubsection{Practical} The a-posterior error outcome and the numerical
reconstructions of multiple peaks at grid points in the vicinity of
the original peaks give us the intuition to introduce the adaptive
grid concept for the recovery of the original peaks. Hence, we
propose an adaptive super-resolution scheme
consisting of two main stages. 
 First we determine the intervals which include the
support of the original peaks and we separate multiple original
peaks which are close to each other. This is achieved by adjusting
the grid as the computations proceed in a manner dependent upon the
previous sparse solution.
 Then, the coordinates of the locations and the amplitudes of the peaks are
approximated based on the numerical solution obtained from the first
stage and the set of equations following from the optimality
condition of the formed $\ell_1$-norm minimization problem.

The proposed adaptive algorithm shares some similarities with other
superresolution algorithms e.g. \cite{Falcon,Zhu2012}; however,
our approach embeds an automatic adaptation scheme \cite{Trottenberg2000} since 
it  restricts and refines the grid in an unsupervised manner only in
the neighborhoods where there is indication that a peak exists.
This additionally allows to solve  iteratively a small to medium size
linear problem using convex optimization techniques
\cite{Boyd2004}, instead of a big size problem as in \cite{Falcon}.

Overall, our analysis provides  theoretical insights on the effects
of gridding (a.k.a discetization of the parameter space), and it can help the algorithmic development  in the
direction of avoiding heuristic post-processing steps  by using
information about the convolution kernel properties and the
formulation of the minimization problem rather than  resorting in
unreliable approximations as often happens in application papers (e.g. in \cite{Tang2013a}).

\section{deconvolution of sparse peaks by convex
optimization}\label{sec:ContinuousVsDiscrete}

In the following we discuss the theoretical basis of sparse peak
deconvolution using convex optimization approaches and put it in
perspective with classical discretization issues in numerical
analysis.

\subsection{Sparsity over the continuum and its discretization}
Let us start by formulating the problem  over the continuum, following
\cite{Bredies2013,Duval2015}, which is the underlying ideal sparse
peak deconvolution to which we expect minimizers of
\eqref{eq:deconvolutionProblem} to converge to. For a Radon measure
$\mu$ on $\Omega$ we denote its total variation by
\begin{equation}
    \Vert \mu \Vert_{TV} = \sup_{\varphi \in C_0(\Omega)} \int_\Omega \varphi(x) ~d\mu(x).
\end{equation}
The convex variational problem solved for sparse peak deconvolution
in a continuum setting is then given by
\begin{equation} \label{eq:deconvolutionProblemContinuum}
    J_\infty(\mu) = \frac{1}2 \Vert G*\mu - f \Vert^2 + \lambda \Vert \mu \Vert_{TV}.
\end{equation}
Now \eqref{eq:deconvolutionProblem} can be interpreted as a
discretization on a given grid, it can indeed be rephrased as
\begin{equation}
    J_N(\mu) = \left\{ \begin{array}{ll}\frac{1}2 \Vert G*\mu - f \Vert^2 + \lambda \Vert \mu \Vert_{TV} & \text{if } \mu \in \text{ span}(\{\delta_{x_k}\}_{k=1,\ldots,N}) \\ +\infty & \text{else.}\end{array} \right.
\end{equation}
It is straight-forward to show that the functionals $J_N$
$\Gamma$-converge to $J_\infty$, but one can also ask for more
quantitative error estimates, which we shall discuss below.

By standard arguments we can verify the following result for the
discretized problem (cf. \cite{PIA2015,Bredies2013,Duval2015} for
analogous results on the continuum problem
\eqref{eq:deconvolutionProblemContinuum}):
\begin{prop}\label{prop:existence}
For $\lambda \geq 0$ there exists a solution of
\eqref{eq:deconvolutionProblem}. If $\lambda > \|A^*f\|_\infty$ then
the unique minimizer is given by $c = 0$. If $\lambda <
\|A^*f\|_\infty$, then each solution is different from zero.
\end{prop}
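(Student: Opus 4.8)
The plan is to establish the three assertions in turn, all of them resting on the convexity of $J$ together with one elementary comparison of an arbitrary candidate $c$ against the origin.

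For existence I would use the direct method. The functional $J(c)=\tfrac12\|Ac-f\|^2+\lambda\|c\|_1$ is proper, continuous and convex on $\R^N$, and for $\lambda>0$ it is coercive because $J(c)\ge\lambda\|c\|_1\to\infty$ as $\|c\|\to\infty$; its sublevel sets are therefore compact and a minimizer exists by Weierstrass. The only case needing extra thought is $\lambda=0$, where the $\ell_1$ term no longer enforces coercivity: here I would note that $A(\R^N)$ is a finite-dimensional and hence closed subspace of $L^2(\Omega)$, so the orthogonal projection of $f$ onto it is attained, which furnishes a minimizer (equivalently, restrict $J$ to the orthogonal complement of $\ker A$, where it is coercive).

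The engine for the remaining two statements is the exact expansion
\begin{equation}
J(c)-J(0)=\tfrac12\|Ac\|^2-\langle c,A^*f\rangle+\lambda\|c\|_1,
\end{equation}
together with the H\"older bound $|\langle c,A^*f\rangle|\le\|c\|_1\,\|A^*f\|_\infty$, which yields
\begin{equation}
J(c)-J(0)\ge\tfrac12\|Ac\|^2+(\lambda-\|A^*f\|_\infty)\,\|c\|_1.
\end{equation}
If $\lambda>\|A^*f\|_\infty$, the right-hand side is strictly positive for every $c\ne0$, so $J(c)>J(0)$ and $c=0$ is the unique minimizer. If $\lambda<\|A^*f\|_\infty$, I would instead display an explicit descent direction: choosing an index $j$ with $|(A^*f)_j|=\|A^*f\|_\infty$ and testing $c=t\,\mathrm{sign}((A^*f)_j)\,e_j$ gives $J(c)-J(0)=\tfrac12 t^2\|Ae_j\|^2+t(\lambda-\|A^*f\|_\infty)$, whose negative linear term dominates for small $t>0$; hence $0$ is not optimal and, the solution set being nonempty by the first part, every minimizer is nonzero.

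The computations here are routine, so the real care goes into the two boundaries of the argument. Existence at $\lambda=0$ should not be overlooked, and it is the strict separation $\lambda>\|A^*f\|_\infty$ (as opposed to $\ge$) that promotes mere optimality of the origin to its uniqueness. I expect this strictness to be the only delicate point, and it is illuminated by the equivalent subdifferential viewpoint: since $0\in\partial J(0)=-A^*f+\lambda\,\partial\|0\|_1$ with $\partial\|0\|_1$ the $\ell_\infty$ unit ball, the origin is a minimizer precisely when $\|A^*f\|_\infty\le\lambda$, and strict inequality places $A^*f/\lambda$ in the interior of that ball, which is exactly what rules out a second minimizer.
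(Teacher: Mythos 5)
Your proof is correct, and for the first two assertions it follows essentially the same route as the paper: existence by convexity/coercivity, and the expansion $J(c)-J(0)=\tfrac12\|Ac\|^2-\langle c,A^*f\rangle+\lambda\|c\|_1$ combined with the H\"older bound $|\langle c,A^*f\rangle|\le\|A^*f\|_\infty\|c\|_1$ to get uniqueness of $c=0$ when $\lambda>\|A^*f\|_\infty$. You are also right to flag $\lambda=0$ as the one place where coercivity genuinely fails; the paper asserts coercivity without comment, and your projection-onto-$A(\R^N)$ argument closes that gap cleanly. The substantive difference is in the case $\lambda<\|A^*f\|_\infty$: the paper tests the candidate $c=\epsilon A^*f$, whereas you test $c=t\,\mathrm{sign}((A^*f)_j)e_j$ for an index $j$ attaining the sup-norm. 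Your choice is the more robust one. Indeed, with the paper's candidate one gets $J(\epsilon A^*f)-J(0)=\epsilon\bigl(\lambda\|A^*f\|_1-\|A^*f\|_2^2\bigr)+{\cal O}(\epsilon^2)$, and the linear coefficient is negative only if $\lambda<\|A^*f\|_2^2/\|A^*f\|_1$, a quantity that can be strictly smaller than $\|A^*f\|_\infty$ (take $A^*f$ with one large entry and many small ones); so the paper's descent direction does not work under the stated hypothesis alone, while your single-coordinate direction yields $J(c)-J(0)=\tfrac12 t^2\|Ae_j\|^2+t(\lambda-\|A^*f\|_\infty)<0$ for small $t>0$ exactly when $\lambda<\|A^*f\|_\infty$. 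Your closing subdifferential remark, identifying optimality of the origin with $\|A^*f\|_\infty\le\lambda$, is a nice way to see that the threshold in the proposition is sharp, though it is not needed for the proof.
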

\begin{proof}
Convexity, coercivity, and nonnegativity immediately imply the
existence of a minimizer. Now let $\lambda > \|A^*f\|_\infty$, then
\begin{eqnarray*} J(c)&=&\frac{1}2 \| A c - f\|^2 + \lambda  \|c\|_1  \\ &=&  \frac{1}2 \|Ac\|^2 + \frac{1}2\|f\|^2 + \lambda  \|c\|_1  -
\langle A^* f, c\rangle \\ &\geq& \frac{1}2 \|Ac\|^2 +
\frac{1}2\|f\|^2  + (\lambda-\|A^*f\|_\infty)  \|c\|_1  \geq
\frac{1}2\|f\|^2 \\&=&J(0),
\end{eqnarray*}
with inequality only for $c=0$. Hence, $c = 0$ is the unique
minimizer. In the case $\lambda < \|A^*f\|_\infty$ we choose $c=
\epsilon A^*f $ with $\epsilon > 0$ sufficiently small to verify
that there exists a $c$ yielding a functional value lower than
$\frac{1}2\|f\|^2$.
\end{proof}

\subsection{Optimality conditions}
As a next step we state the optimality conditions for
\eqref{eq:deconvolutionProblemContinuum} and the discrete version
\eqref{eq:deconvolutionProblem}. Those are important for error
estimates and further analysis in this paper.

Let us start with the sub-differential of the total variation norm,
which is given by (cf. \cite{Bredies2013})
\begin{equation}
    \partial \Vert \mu \Vert_{TV} = \{ q \in L^\infty(\Omega)~|~ \Vert q \Vert_\infty \leq 1, q(x) \equiv \pm 1 \text{ on supp}(\mu_\pm) \}.
\end{equation}
Here $\mu = \mu_+ - \mu_-$ denotes the standard Jordan decomposition
of the signed measure $\mu$. Since the quadratic part of the
functional $J_\infty$ is differentiable and $G$ is continuous, i.e.
the convolution maps into the pre-dual of the space of Radon measures, we obtain the optimality condition
\begin{align}\label{eq:opt_TV}
&\Vert G\ast f-H\ast \mu \Vert_\infty \leq \lambda  \\
& G\ast f  - H\ast \mu  = \pm \lambda \quad \text{in supp}(\mu_\pm).
\end{align}
where $H=G\ast G$.
 On the other hand, the optimality condition of the discrete
problem \eqref{eq:deconvolutionProblem} is
\begin{equation}\label{eq:opt_l1_orig}
    \lambda p_j = [A^*(f-A c)]_j\quad\quad\mbox{for}~j=1,\ldots,N,
\end{equation}
\noindent where $p\in\mathbb{R}^N$ is contained in the
sub-differential of $\|c\|_1$. The right hand side of the previous
equation\footnote{ The convolution of signal $\mu^N$ with a kernel,
e.g. Gaussian $G$, is $G*\mu^N=\sum_{k=1}^Nc_kG(x-x_k)$ and in
matrix form this can be expressed as
$[Ac]_j=\sum_{k=1}^Nc_kG(x_j-x_k)$. Moreover, $G*G = H$ and since
function ${G}$ is symmetric and the convolution is associative
$G\ast(G\ast \mu^N) = (G*G)* \mu^N=H\ast \mu^N
=\sum_{k=1}^Nc_kH(x-x_k) .$} is
$[A^\mathrm{T}f]_j=\sum_{l=1}^L\gamma_l H(x_j-\xi_l)$ and
$[A^\mathrm{T}Ac]_j=\sum_{k=1}^Nc_iH(x_j-x_k)$. The optimality
condition can be written as
\begin{equation}\label{eq:opt_l1}
    \lambda p_j =\sum_{l=1}^L\gamma_iH(x_j-\xi_l)-\sum_{k=1}^N c_k H(x_j-x_k),
\end{equation}
\noindent for $k=j$,  $p_j\in sign(c_j)$ when $c_j\neq 0$ and $\vert
p_j \vert<1$ when $c_j=0$.

In order to highlight the connection with the continuum formulation,
we rewrite the optimality solely for the measure $\mu^N$ and deduce
that
\begin{align}\label{eq:opt_l1_generic}
&G\ast f-\vert H\ast \mu^N\vert  \leq \lambda \quad \text{in } \{x_k\}_{k=1,\ldots,N} \\
&  G\ast f - H\ast \mu^N  = \pm \lambda \quad \text{in
supp}(\mu^N_\pm).
\end{align}
We see that the main difference to the optimality condition in the
continuum is that the first equality only holds on the grid points
$x_k$ and not in the whole domain $\Omega$. Note that due to the
continuity of $G$ and $H$ one will expect (at least for sufficiently
small grid size) that if  $\frac{\vert H\ast \mu^N - G\ast
f\vert}{\lambda}-1$ is strictly less than zero in a set of
neighbouring grid points, then it remain less than zero in the area
bounded by these points (further details are given in
section~\ref{subsection:offgridPeaks}). Hence, the main violation of
the continuum optimality condition considered for $\mu^N$ will
appear close to grid points where it equals zero, usually
corresponding to non-zero coefficients $c_k$. This yields a first
idea for using an adaptive computational grid. As we shall see below
this can be further improved and backed up by a-posteriori error
estimation.

\subsection{A-Posteriori error estimate}\label{section error estimation}
In order to derive suitable error estimates for non-smooth convex
variational problems such as problem
\eqref{eq:deconvolutionProblem}, it is now a standard approach to
use the Bregman distance as proposed in \cite{Burger2004} (we refer
to \cite{Burger2016} for an overview). The Bregman distance for the
total variation distance is given by
\begin{equation}
    D^q_{TV}(\tilde \mu,\mu) = \Vert \tilde \mu \Vert_{TV} - \Vert \mu \Vert_{TV} - \langle q, \tilde \mu - \mu \rangle
\end{equation}
for a subgradient $q \in \partial \Vert \mu \Vert_{TV}$. Given a
subgradient $\tilde q \in \partial \Vert \tilde \mu \Vert_{TV}$, we
will denote by
\begin{equation}
    D^{\tilde q,q}_{TV} = D^{\tilde q}_{TV}(\mu,\tilde \mu) + D^q_{TV}(\tilde \mu,\mu) = \langle \tilde q - q, \tilde \mu - \mu \rangle
\end{equation}
the symmetric Bregman distance.

The key idea here is to use the difference in the optimality
conditions and take a duality product with the difference of the
measures. For this sake, we use the following notation
\begin{equation}\label{eq:subdiff_measures}
\begin{split}
    q^N(x) &:= \min\{ \max\{ \frac{G*f(x) - H*\mu^N(x)}\lambda,-1\},1\},\\ \qquad r^N(x) &:=     \frac{G*f(x) - H*\mu^N(x)}\lambda -q^N(x).
\end{split}
\end{equation}
It is straightforward to see that $q^N \in \partial \Vert \mu^N(x)
\Vert_{TV}$ and hence from \eqref{eq:opt_TV},
\eqref{eq:opt_l1_generic} and \eqref{eq:subdiff_measures} we
obtain
\begin{equation} H*(\mu - \mu^N) + \lambda (q - q^N) =
\lambda r^N. \end{equation}
Now the announced duality product with
$\mu-\mu^N$ implies an a-posterior error estimate of the form
\begin{equation}
    \Vert G*(\mu - \mu^N) \Vert^2 + \lambda D^{q,q^N}_{TV}(\mu,\mu^N) = \lambda \langle r^N, \mu - \mu^N \rangle.
\end{equation}
Thus, we observe that only regions with $r^N \neq 0$ contribute to
the error between $\mu$ and $\mu^N$.

Moreover, via the optimality condition \eqref{eq:opt_l1_generic} of problem
\eqref{eq:deconvolutionProblem},   we have that $r^N(x_k) = 0$ for any grid
point $x_k$ and thus  $\langle r^N, \mu^N \rangle = 0$. So, we can write
\begin{equation}
    \Vert G*(\mu - \mu^N) \Vert^2 + \lambda D^{q,q^N}_{TV}(\mu,\mu^N) \leq \lambda \Vert r^N \Vert_\infty \Vert \mu \Vert_{TV}.
\end{equation}
The previous expression shows that  by reducing the supremum norm of $r^N$ is crucial for reducing
the global error. This can be achieved by introducing finer
computational grids. In Figure~\ref{fig:PosteriorError} we can
observe that the value of $\|r^N\|_\infty$ decreases with respect to
the size of the computational grid.
\begin{figure}[!htb]
    \begin{center}
       \includegraphics[width=0.5\textwidth]{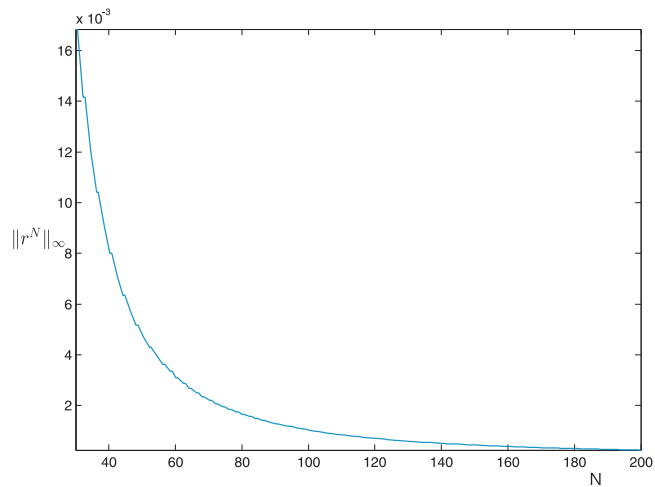}
       \caption{Supremum norm of $r^N$(residual) with respect to the size $N$ of the computational grid. Here, we considered $\mu = \delta_\xi$ and $\mu^N = \sum_{k=1}^N
c_k\delta_{x_k}$. Thus, $r^N(x) = \frac{H(x-\xi) - \sum_{k=1}^N c_k
H(x-x_k)}{\lambda}-q^N(x)$ based on \eqref{eq:subdiff_measures} and
$H(x)$ was a Gaussian kernel.}\label{fig:PosteriorError}
        \end{center}
    \end{figure}
However, we remark that by introducing fixed fine grids
(as proposed for instance in \cite{Zhu2012,Mukamel_2012,Falcon}) the
computational performance and stability can be affected.
To overcome computational limitations, we later propose to introduce a
progressive grid refinement.

\section{Numerical solutions in the cases of single
peaks}\label{sec:deconvolutionSinglePeak}
 Here, as started in \cite{heinsnovel} we
analyze in detail the solutions of the form
\eqref{eq:numericalSolution} in the case of noiseless data $f$
produced by a single positive peak, i.e.
\begin{equation}
    \mu = \gamma \delta_\xi, \qquad\mbox{and}\qquad f(x) = \gamma G(x-\xi)\;,
\end{equation}
where $\xi\in\mathbb{R}^d$ and $d\geq1$ \footnote{Our analysis is based on a single positive peak which is often the case in image processing applications. We note that  the theorems/conclusions presented in this section are valid also for a negative peak.}.
 We can easily interpret Proposition \eqref{prop:existence} in this
case as $A^*f = \gamma [H(x_j-\xi)]_{j=1,\ldots,N}$ and
$\|A^*f\|_\infty>\lambda$ in order to obtain nonzero solutions.
Hence, we need $\lambda < \gamma \,\underset{j}{\max}\{
H(x_j-\xi)\}$ and since $H$ attains its maximum at zero, a simple
sufficient condition is given by
\begin{equation} \label{eq:lambdacondition}
 \lambda < \gamma H(0),
\end{equation}
which is also necessary in the case of $\xi$ coinciding with a grid
point. We will thus assume condition \eqref{eq:lambdacondition}
throughout the whole section without further notice.

\subsection{Exact recovery}
The simplest case to start with, which can directly be treated in
arbitrary dimensions, is that $\xi$ coincides with one of the grid
points. In this case we obviously expect perfect reconstruction,
which is confirmed by the following result:
\begin{prop}
    Let be $\xi = x_K$ for some $K \in\{1,\ldots,N\}$.
    Then there exists a one-sparse solution $\mu^N$ of
    \eqref{eq:deconvolutionProblem}, which is nonzero at $x_K$, i.e.,
    $\mu^N = c_K \delta_{x_K}$ with $c_K =  \frac{\gamma H(0)-\lambda}{H(0)} \in (0,\gamma)$.
    \label{thm:recon_spikes_on_gridpoints}
\end{prop}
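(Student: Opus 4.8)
The plan is to exhibit the claimed one-sparse vector explicitly and verify that it satisfies the first-order optimality condition \eqref{eq:opt_l1}. Since $J$ is convex (this was already used in Proposition~\ref{prop:existence}), the subdifferential inclusion \eqref{eq:opt_l1} is both necessary and sufficient for global optimality; hence producing a single vector $c$ together with an admissible subgradient $p$ satisfying it simultaneously establishes existence and the stated form of the minimizer.

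Concretely, I would set $c_K = \frac{\gamma H(0)-\lambda}{H(0)}$ and $c_k = 0$ for all $k \neq K$, and then evaluate the right-hand side of \eqref{eq:opt_l1} at this candidate. Because $\xi = x_K$ and only the $K$-th coefficient is nonzero, the double sum collapses and for \emph{every} index $j$ one obtains
\begin{equation}
\lambda p_j = \gamma H(x_j-x_K) - c_K H(x_j-x_K) = (\gamma-c_K)\,H(x_j-x_K) = \frac{\lambda}{H(0)}\,H(x_j-x_K),
\end{equation}
where the last equality uses $\gamma - c_K = \lambda/H(0)$. For the active index $j=K$ this yields $p_K = H(0)/H(0) = 1$, consistent with $p_K \in \mathrm{sign}(c_K)$ since $c_K>0$ under \eqref{eq:lambdacondition}. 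For each inactive index $j\neq K$ it yields $p_j = H(x_j-x_K)/H(0)$, and admissibility of the subgradient then requires $|p_j|\le 1$.

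The single nontrivial ingredient is therefore the bound $|H(x)|\le H(0)$ for all $x$. I would derive this from the structure $H=G\ast G$: since $G$ is symmetric, $H$ is the autocorrelation of $G$, and Cauchy--Schwarz gives $|H(x)| = \bigl|\int_\Omega G(y)G(x-y)\,dy\bigr| \le \|G\|_{L^2}^2 = H(0)$, with strict inequality whenever the shifted kernel is not proportional to $G$, i.e. for every $x=x_j-x_K\neq 0$ when $G$ is Gaussian (or any strictly positive-definite kernel). This secures $|p_j|<1$ on the inactive set, matching the strict form of the optimality condition and confirming that the candidate is indeed optimal. This autocorrelation bound is the main (and essentially only) obstacle; the remainder is bookkeeping. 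Finally, $c_K\in(0,\gamma)$ is immediate from $0<\lambda<\gamma H(0)$, which closes the argument.
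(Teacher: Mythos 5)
Your proposal is correct and follows essentially the same route as the paper: plug the candidate $c_K=\frac{\gamma H(0)-\lambda}{H(0)}$ into the optimality condition \eqref{eq:opt_l1}, get $p_K=1$ on the active index, and bound $|p_j|=|H(x_j-x_K)|/H(0)<1$ on the inactive set. The only (harmless) difference is that you justify $|H(x)|\le H(0)$ via the autocorrelation/Cauchy--Schwarz argument, whereas the paper simply invokes its standing assumption that $H$ attains its unique maximum at zero.
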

\begin{proof}
Without restriction of generality assume that $\gamma > 0$. In order
to prove the assertion, we have to check whether the optimality
condition of \eqref{eq:deconvolutionProblem} holds under the
assumptions mentioned above. From our prior computations \eqref{eq:opt_l1}, the
optimality condition \eqref{eq:opt_l1} reduces to
\begin{equation}\label{eq:opt_simple_l1}
     \lambda p_j = \gamma  H(x_j-\xi) -  c_K H(x_j-x_K)  = (\gamma-c_K)H(x_j-x_K) .
\end{equation}
We have to differentiate between the cases where $j=K$ and
$j\neq K$.
\\
For $j=K$ the optimality condition \eqref{eq:opt_simple_l1} is
\begin{equation}\label{eq:ExactSupportSinglePeak}
      p_K = (\gamma-c_K)\frac{H(0)}{\lambda}  = 1\; .
\end{equation}
For $j\neq K$  we have
\begin{align*}
    p_j = \frac{\gamma-c_K}{\lambda}H(x_j-x_K) < (\gamma-c_K)\frac{H(0)}{\lambda} \; ,
\end{align*}
due to the fact that $H$ attains its maximum at zero. Hence in both
cases the optimality condition is fulfilled and we obtain the
assertion.
\end{proof}
Therefore, the reconstruction of the support of a delta peak is
exact if the position of the peak coincides with a grid point and
the regularization parameter is small enough.

\subsection{Recoveries for  off-the-grid
peaks}\label{sec:1Danalysis}
Let us consider the more frequent case where
$\mu=\gamma\delta_\xi$ is located among a set of grid points
$\mathcal{N}=\{x_k\}_{k=1:N}$.
Here, with the help of the optimality
conditions \eqref{eq:opt_l1_generic}, we define a so-called
optimality curve $p(x)$ given by
\begin{equation}p(x) =
\frac{G\ast f- H\ast\mu^N }{\lambda}-1,\end{equation}
where $f=G\ast\mu$, $H=G\ast G$ (smooth and symmetric) and
$\mu^N=\sum_{x_k\in\mathcal{N}}c_k\delta_{x_k}$
is the nontrivial numerical solution obtained from the minimization
problem~\eqref{eq:deconvolutionProblem}.
The optimality curve $p(x)$
can be regarded as analogous to the $TV$ dual certificate
\cite{Poon2019} for the $\ell_1$- norm minimization problem
that will allow us to understand the expected
patterns of the numerical solutions around $\xi$.

To ease our analysis, we rewrite $p(x)$ as
\begin{equation}\label{eq:p(x)}
p(x) =
\frac{1}{\lambda}H(x-\xi)-\frac{1}{\lambda}\sum_{x_k\in\mathcal{N}}c_kH(x-x_k)-1.
\end{equation}
We note that $p(x_j)<0$  when $c_j=0$ and $p(x_j) = 0$ when $c_j > 0$.
\subsubsection{Single spatial dimension} In one dimensional spaces, we consider that signal $\mu=\gamma\delta_\xi$ is located
between two grid points, i.e. $\xi\in(x_K,x_{K+1})$. For the
following consideration, the interval length $h$ will be defined as
$$ h:= |x_{K+1} - x_K| \; . $$

By employing function $p(x)$ \eqref{eq:p(x)} in the vicinity
of peak $\xi$, we can prove the following Theorem.
\begin{thm}\label{th:1D}
~\\
Let $H \in C^3(\mathbb{R})$ be nonnegative with a unique maximum at
zero and let $h$ be sufficiently small. \\ Assume $\mu=\gamma\delta_\xi$,
$\xi\in(x_K,x_K+\frac{h}{2})$ for $K\in\{1,\ldots,N-1\}$ and
    $\lambda<\gamma H(x_K-\xi)$ holds. \\
    When we have that $\xi\in(x_K,\,x_K+\frac{\lambda h}{2\gamma H(0)})$, there
    exists a solution of \eqref{eq:deconvolutionProblem}, which can be
    written as $\mu^N = a\delta_{x_K}$ with $a =
    \frac{\gamma H(x_K-\xi)-\lambda}{H(0)} \in (0,\gamma)$.\\
    Moreover, if we have $\xi\in(x_K+\frac{\lambda h}{2\gamma H(0)},\, x_K+\frac{h}{2})$,
    then $\mu^N = a\delta_{x_K}$ is \emph{not} a solution of
    \eqref{eq:deconvolutionProblem} for any $a\in\mathbb{R}^{+}$. Instead the solution is of the form
        $\mu^N = c_K \delta_{x_K} + c_{K+1} \delta_{x_{K+1}}$ with $c_K$ and $c_{K+1}$ being nonzero and of the same sign as $\gamma$.
    \label{thm:spikes_between_grid_points}
\end{thm}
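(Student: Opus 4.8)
The guiding principle is that, since \eqref{eq:deconvolutionProblem} is convex, the first-order optimality conditions \eqref{eq:opt_l1} are necessary and sufficient for a minimizer, and in the single-peak setting they are encoded by the optimality curve $p(x)$ of \eqref{eq:p(x)}: a candidate $\mu^N$ is a solution if and only if $p(x_j)=0$ at every node carrying mass of the same sign as $\gamma$ and $p(x_j)<0$ at every node with zero coefficient (cf. the remark following \eqref{eq:p(x)}). It is convenient to set $t:=\xi-x_K\in(0,h/2)$ and to use that $H$ is even with $H'(0)=0$ and $H''(0)<0$, so that $H(x_K-\xi)=H(t)$, $H(x_{K+1}-\xi)=H(h-t)$ and $H(x_{K+1}-x_K)=H(h)$.

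For \textbf{Part 1} I would test the candidate $\mu^N=a\delta_{x_K}$. Imposing $p(x_K)=0$ is a single linear equation that forces $a=(\gamma H(t)-\lambda)/H(0)$, and the hypotheses $\lambda<\gamma H(t)$ and $H(t)<H(0)$ immediately give $a\in(0,\gamma)$. It then remains to check $p(x_j)<0$ at every other node, the decisive one being the nearest neighbour $x_{K+1}$ towards which $\xi$ is displaced. A direct evaluation gives $\lambda p(x_{K+1})=\gamma H(h-t)-aH(h)-\lambda$, and a second-order Taylor expansion in $h$ (legitimate since $t=O(h)$ and $H\in C^3$) collapses the leading terms to $\lambda p(x_{K+1})=H''(0)\,h\,\bigl(\tfrac{\lambda h}{2H(0)}-\gamma t\bigr)+O(h^4)$. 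Because $H''(0)<0$, this is negative exactly when $t<\tfrac{\lambda h}{2\gamma H(0)}$, which is the stated sub-interval. The remaining nodes are handled by the same expansion, which yields $\lambda p(x_{K+m})=H''(0)\,mh\,\bigl(\tfrac{\lambda mh}{2H(0)}\mp\gamma t\bigr)$ at $x_{K\pm m}$; the left-hand brackets are always positive, and the right-hand thresholds $\tfrac{\lambda mh}{2\gamma H(0)}$ increase in $m$, so all are negative once $t<\tfrac{\lambda h}{2\gamma H(0)}$. Hence the optimality conditions hold and $a\delta_{x_K}$ is a solution.

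For \textbf{Part 2}, when $t>\tfrac{\lambda h}{2\gamma H(0)}$, I would first rule out a one-sparse solution at $x_K$: any such minimizer must satisfy $p(x_K)=0$, which again forces $a=(\gamma H(t)-\lambda)/H(0)$, but the computation above then gives $p(x_{K+1})>0$, contradicting the requirement $p(x_{K+1})<0$ at a zero node; hence no $a\in\mathbb{R}^+$ works. To produce the genuine minimizer I would posit the two-sparse ansatz on $\{x_K,x_{K+1}\}$ and solve the $2\times2$ system $p(x_K)=p(x_{K+1})=0$, namely
\begin{equation*}
\begin{pmatrix} H(0) & H(h) \\ H(h) & H(0)\end{pmatrix}
\begin{pmatrix} c_K \\ c_{K+1}\end{pmatrix}
=\begin{pmatrix}\gamma H(t)-\lambda \\ \gamma H(h-t)-\lambda\end{pmatrix}.
\end{equation*}
For small $h$ the Gram matrix is positive definite, so the system is uniquely solvable, and expanding the two components to leading order gives $c_{K+1}\propto\bigl(t-\tfrac{\lambda h}{2\gamma H(0)}\bigr)$ and $c_K\propto\bigl(\gamma H(0)(h-t)-\tfrac{\lambda h}{2}\bigr)$; the former is positive precisely in the regime $t>\tfrac{\lambda h}{2\gamma H(0)}$, and the latter is positive because $h-t>h/2$ together with $\gamma H(0)>\lambda$. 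A final verification that $p(x_j)<0$ at all remaining nodes (again via the Taylor estimate, now with both bumps subtracted) confirms optimality; positive definiteness of the Gram submatrix and the strict inequalities off the support give uniqueness, so the minimizer has the asserted two-spike form with $c_K,c_{K+1}>0$.

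The \textbf{main obstacle} is the sharpness of the threshold. The value $\tfrac{\lambda h}{2\gamma H(0)}$ is exactly where the \emph{second-order} term of $p(x_{K+1})$ changes sign, so near it the leading term vanishes and the $O(h^4)$ remainder must be controlled uniformly — this is where $H\in C^3$ and ``$h$ sufficiently small'' are essential. A clean way to make the transition genuinely monotone is to observe that $\tfrac{d}{dt}\bigl(\lambda p(x_{K+1})\bigr)=-\gamma\bigl(H'(h-t)+\tfrac{H(h)}{H(0)}H'(t)\bigr)>0$ on $(0,h/2)$, since $H'(h-t)<0$ and $H'(t)<0$. Thus $p(x_{K+1})$ is strictly increasing in $t$ and crosses zero exactly once, and the Taylor expansion only has to locate that single crossing at $\tfrac{\lambda h}{2\gamma H(0)}+o(h)$, which suffices to establish the stated dichotomy.
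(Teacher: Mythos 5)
Your proposal is correct and follows essentially the same route as the paper's proof in Appendix~\ref{pr:theorem1D}: impose $p(x_K)=0$ to get $a=(\gamma H(x_K-\xi)-\lambda)/H(0)$, Taylor-expand the optimality curve at $x_{K+1}$ to locate the threshold $\frac{\lambda h}{2\gamma H(0)}$, and in the supercritical regime solve the $2\times 2$ Gram system for $c_K,c_{K+1}$ and check the remaining nodes (your leading-order expressions for $c_K$ and $c_{K+1}$ agree with the paper's). Your closing monotonicity observation, that $p(x_{K+1})$ is strictly increasing in $t=\xi-x_K$ so the sign change occurs at a single crossing, is a worthwhile addition the paper does not make: it handles the uniformity issue near the threshold where the $O(h^2)$ leading term degenerates and the Taylor remainder (which under $H\in C^3$ is only $O(h^3)$, not the $O(h^4)$ you state) would otherwise be uncontrolled.
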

The proof of Theorem~\ref{thm:spikes_between_grid_points} is given
in Appendix~\ref{pr:theorem1D}. Figure
\ref{fig:sparse_spikes_interval_tikz} illustrates the assertion of
Theorem \ref{thm:spikes_between_grid_points}. Note that due to the
symmetry of $H$, the analogous claim holds for $\xi$ in the other
half of the interval.
\begin{figure}[h!]
    \begin{center}
             \includegraphics[width=0.55\textwidth]{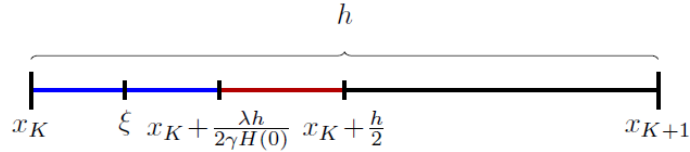}
        \caption[Reconstruction of sparse spikes]{If $\xi$ is in the blue interval,
        the reconstructed solution $\mu^N$ consists of only one peak. In the case
        that $\xi$ is located in the red interval, then one recovered peak is not sufficient.}
        \label{fig:sparse_spikes_interval_tikz}
    \end{center}
\end{figure}
Figure~\ref{fig:OptCurve}  depicts the optimality curve $p(x)$ for a positive peak when $H(x)$ is a Gaussian kernel.
The curve is downward concave in the area around $\xi$ which implies that there are at most two
points on
x-axis where $p(x)=0$. From these points, at least one is the grid point with the nonzero coefficient of
$\mu^N$.
 We can observe
that the number of the recovered peaks depends on the distance
between the location of the exact peak (denoted by red
$\mathrm{x}$) and the neighboring grid points (given fixed
$\lambda=0.1\lambda_\mathrm{max}$).
\begin{figure}[h!]
    \begin{center}
       \includegraphics[width=0.55\textwidth]{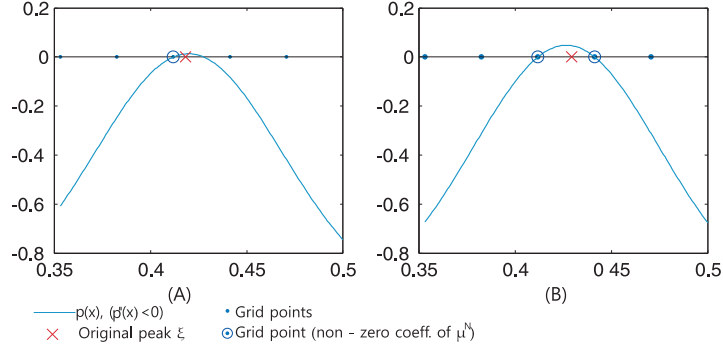}
       \caption{Function $p(x)$ around location $\xi$ for the cases where the reconstructed signal $\mu^N$ has (A) a single
nonzero coefficient and (B) two nonzero coefficients between the location of
the exact peak $\xi$.}
      \label{fig:OptCurve}
    \end{center}
\end{figure}
\subsubsection{Higher spatial dimension}\label{subsection:offgridPeaks}
In higher spatial dimensions, the topological structure is more complicated
which makes a rigorous proof by analogous arguments impossible. However,
we can at least make some formal arguments and computational experiments concerning
the optimality curve $p(x)$ \eqref{eq:p(x)}. First of all we expect that for $\gamma>0$,  $p(x)$ is concave around $\xi$ and  nonzero entries are only found in the convex hull of $\xi$ on the grid, i.e. the
largest convex hull $C_\xi$ that can be formed of a set of grid points surrounding
the peak $\xi$ such that no other grid point is contained in $C_\xi$.

The following observations can be made:
\begin{itemize}
  \item Given any location $x$ far from $\xi$, we have that $p(x)<0$, since
the positive term $H(x-\xi) $  is smaller than $H(x-x_k)$.
\item
Since the  kernel $H(x)$ is smooth
, and  considering that its width
is greater than the resolution of the selected grid (which is often
the case for low resolution images), then
$ p(x)<0$ in the area bounded by grid points where $c_j=0$.
  \item
Given $\{x_j~|~c_j > 0\}$ forms a small neighborhood of $\xi$, consisting of $N \geq 1$ points, we can make a local Taylor expansion similar to the one-dimensional case.
First of all we have  that
$p(x_j)=0$ for all such $j$. By summing those with respect to $j$  we get
$$
\sum_{j}  \sum_{k}c_k {H(x_j-x_k)} = \sum_j
{H(x_j-\xi)}-{\lambda N}\; .$$ 
Using the lowest order approximation for small arguments we find
$$N H(0) \sum_{k}c_k = N H(0) - \lambda N + {\cal O}(h^2)\;, $$
i.e. to first order
\begin{equation}\label{eq:approximationCoefficients}
 \sum_{k}c_k = 1 - \frac{\lambda}{H(0)}
\end{equation}
Using this approximation
the Hessian can be computed to leading order as
 \begin{align}
\nabla \nabla p(\xi) = &
\frac{1}{\lambda}H(0)-\frac{1}{\lambda}\sum_j c_j  H(\xi-x_j)-1\\
&= \nabla \nabla H(0)\;,
\end{align}
which is negative definite due to our assumptions on $H$. Hence, $p$ is concave in a
neighbourhood of $\xi$, which implies that its level sets are convex.
The points $x_j$ with $p(x_j)=0$ are on the level set $\{ p = 0\}$, i.e. a convex set around $\xi$. Since
$p(x_k) > 0$ is impossible, there is no other grid point inside the convex hull of the $\{x_j\}$.
\end{itemize}

Thus, from those arguments we see that the active grid points ($c_j > 0$) are to be expected in the convex
hull of $\xi$ on the grid. This can be made rigorous under the assumption that the local grid size around $\xi$ is small and there are no active grid points at large distance from $\xi$, which is confirmed in all our numerical experiments.
Figure~\ref{fig:OptCurve2D} illustrates this behaviour by showing the shape of  the function $p$
and its relationship to the nonzero coefficients of the reconstructed
signal $\mu^N$. 
In this figure, the
small blue dots depict the computational grid, the big blue circles
show the grid points with nonzero entries (i.e.
estimated peaks). For the computations,
the regularization parameter was
set $\lambda=0.1 \lambda_\mathrm{max}$ and $H(x)$ was
 Gaussian with standard deviation $\sigma=1.5\sqrt{2}h$ (where $h$
was the grid resolution).
\begin{figure}[h!]
    \begin{center}
       \includegraphics[width=0.64\textwidth]{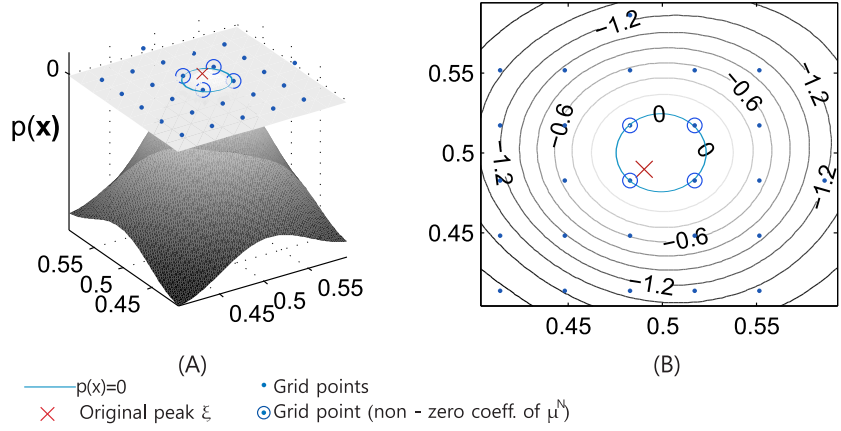}
 \caption{Function $p(x)$ around location $\xi$ when the reconstructed signal $\mu^N$ has four nonzero coefficients. (A)
3D plot of $p(x)$ and xy-plane with the computational grid (marked blue dots) and location of the nonzero coefficients (marked with blue circles)
(B) Isocontours of $p(x)$, computational grid (marked with blue dots) and nonzero locations of $\mu^N$ (marked with blue circles). The exact location denoted by red $\mathrm{x}$.
}
       \label{fig:OptCurve2D}
    \end{center}
\end{figure}
Based on the previous analysis, we can see in  Figure~\ref{fig:lambdaChanging} that the numerical
solution depends on $\lambda$ and the properties of
kernel $H$.
As expected, the number of active grid points increases as $\lambda$ decreases which is
effectively a property of the finite-dimensional $\ell_1$-norm regularization in the convex
hull on the grid.
\begin{figure}[!h]
    \begin{center}
       \includegraphics[width=0.66\textwidth]{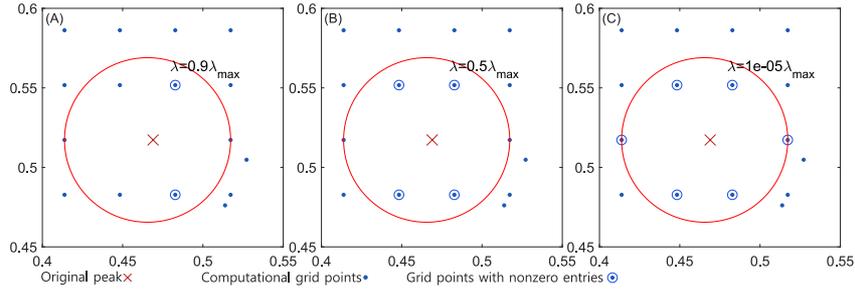}
       \end{center}
       \caption{Numerical solution on a grid  for decreasing value of the regularization parameter
       $\lambda$.
       The blue circles show the locations of the nonzero coefficients of the numerical solution. The blue dots are the grid points. The red circle is the smallest circle that
       encloses the largest convex hull formed by grid points that surround $\xi$ (points that can get nonzero entries).
       The maximum number of nonzero entries is depicted in (C). }
\label{fig:lambdaChanging}
\end{figure}

\section{Recoveries of single peaks using the
$\ell_1$-norm optimality condition}
Let the observations $f$ be of the form $f(x)=\gamma G(x-\xi) +
\tilde f$, with $\tilde f$ supported in some distance to $\xi$. Then, we expect that
problem \eqref{eq:deconvolutionProblem} will yield few nonzero
coefficients $\{c_k\}$ only in a neighborhood of grid
points, ${\cal N}=\{x_k\}$, close to $\xi$ plus additional
non-zeros related to $\tilde f$ in a certain distance.
Hence,
 the confined variational problem is
$$ \min_{(c_k)_{x_k \in {\cal N}}}\| \sum_{x_k \in {\cal N}} c_k G\ast\delta_{x_k} - f \|^2 + \lambda \sum_{x_k \in {\cal N}} \vert c_k \vert\;,$$
where all the $c_k$ have same sign $s \in \{+1,-1\}$.

The associated normal equations are
\begin{equation}
\sum_{x_k \in {\cal N}} c_k G\ast G(x_i-x_k) -G\ast f(x_i) + \lambda
s= 0\;,
\end{equation}
for $x_i \in {\cal N}$ (note that $s$ is the same for all $x_i$).
Given
  $  H(x) = G\ast G(x)=\int_{\Omega} G(x-y)G(y)~dy$,
we can write the associated normal equations as
\begin{equation}\label{eq:associated normal equation}
    \sum_{x_k \in {\cal N}} c_k H(x_i-x_k) - \gamma H(x_i-\xi) + \lambda s= (G*\tilde
f)(x_i) \mbox{ for }\;x_i \in {\cal N}.
\end{equation}
For a small neighbourhood around $\xi$ (and  $h\rightarrow 0$ a
bound for the grid size) we can perform a Taylor-expansion and
obtain
\begin{equation}
\begin{split}\label{eq:OptCond_TaylorExpansion}
    &\left(\sum_{x_k \in {\cal N}} c_k - \gamma\right) H(0)   +  \frac{1}2 \sum_{x_k \in {\cal N}} c_k  (x_i -x_k)^T \nabla^2  H(0)
    (x_i -x_k) - \frac{1}2 \gamma (x_i -\xi)^T \nabla^2  H(0)  (x_i -\xi)
    + \lambda s \\
    & =(G*\tilde f)(\xi) + \nabla  (G*\tilde f)(\xi) (x_i -\xi) + \frac{1}2 (x_i -\xi)^T \nabla^2  (G*\tilde f)(\xi)  (x_i
-\xi) +{\cal O}(h^3)\;,
\end{split}
\end{equation}
 where we have used
$\nabla H(0)=0$. We observe that all equations have the same leading
order term, which yields up to order two
\begin{equation}\label{eq:PeakAmplitude}
    \gamma =    \sum_{x_k \in {\cal N}} c_k +  \frac{\lambda s}{H(0)} -\frac{(G*\tilde f)(\xi)}{H(0)}\;.
\end{equation}
In order to access higher-order terms we can exploit the fact that
set ${\cal N}$ of nonzero coefficients has more than one grid point and thus we can estimate
differences of  equation \eqref{eq:OptCond_TaylorExpansion} for
pairs of grid points $x_i,x_j \in {\cal N}$. This yields
\begin{equation}\label{eq:PeakLoc}
\begin{split}
  \gamma  (x_i -x_j)^T \nabla^2  H(0) \xi
    =& \frac{\gamma}2  \left( x_i^T \nabla^2  H(0)   x_i -x_j^T \nabla^2  H(0)   x_j\right)
\\&-
    \frac{1}2 \sum_{x_k \in {\cal N}} c_k \left( F_k(x_i) - F_k(x_j) \right)\\ & +
\frac{1}{2}\tilde{F}(\xi)+
    {\cal O}(h^3)
\end{split}
\end{equation}
where $F_k(x)= (x -x_k)^T \nabla^2  H(0)  (x -x_k)$ and $
\tilde{F}(\xi)=  (x_i -\xi)^T \nabla^2 (G*\tilde
f)(\xi)(x_i -\xi) - (x_j -\xi)^T \nabla^2  (G*\tilde f)(\xi)  (x_j
-\xi)+ \nabla  (G*\tilde f)(\xi) (x_i - x_j)$.
 Equation \eqref{eq:PeakLoc} can be interpreted as a linear equation for
$\xi\in\Omega$. Having at least $m>d$ different grid points in ${\cal
N}$, we can derive  $m(m-1)/2$ equations.
.
 If we can choose the $x_i - x_j$ to be a
basis of $\R^d$, the negative definiteness of $\nabla^2 H(0)$ and
$\gamma \neq 0$ imply that the matrix formed out of the vectors
$\gamma (x_i -x_j)^T \nabla^2  H(0)$ has  rank $d$.
Thus, we can
uniquely solve for the location $\xi$ and obtain a second order
approximation in $h$ (considering the contribution from $\tilde{f}$
negligible).

\subsection{Examples of peak recoveries}
To demonstrate the previous theoretical results, we present some
examples in one and two dimensional spaces.
\subsubsection{1D spaces}
In the following examples, we consider a signal with three peaks
with amplitudes $\gamma_1$, $\gamma_2$ and $\gamma_3$ at positions
$\xi_1$, $\xi_2$ and $\xi_3$ where $\xi_l\in(0,1)$ for $l=1,2,3$.
The signal is given by
$$ {\mu} = \gamma_1\delta_{\xi_1} + \gamma_2\delta_{\xi_2} +
\gamma_3\delta_{\xi_3} \; . $$ Moreover, we choose a Gaussian
convolution kernel ${G}$ with standard deviation $\sigma = 0.03$.
The continuous convolved data can be expressed analytically as
$$ f(x) = \gamma_1 {G}(x-\xi_1) + \gamma_2
{G}(x-\xi_2) + \gamma_3 {G}(x-\xi_3)\;. $$

For the estimation of the numerical solution $\mu^N$ the domain
$[0,1]$ is discretized and
 the $\ell_1$-norm minimization problem \eqref{eq:deconvolutionProblem} is solved with
$\lambda=0.01\|A^*f\|_\infty$ on a uniform grid of size $N$.

We first consider a grid that
includes $\xi_1$, $\xi_2$ and $\xi_3$. In Figure~\ref{fig:ExactSupport}, we observe that the exact recovery is feasible
(which is in accordance with
preposition \eqref{thm:recon_spikes_on_gridpoints}).
\begin{figure}[!htb]
    \begin{center}
       \includegraphics[width=0.66\textwidth]{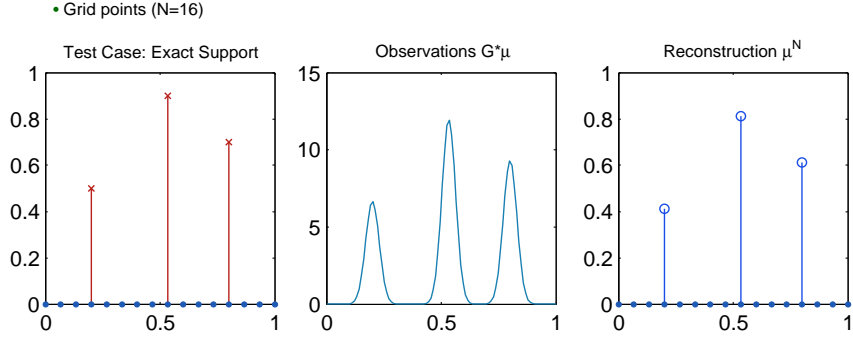}
       \caption{Left image: Original peaks, middle image: observations and right image: solution of the discrete convex problem
       \eqref{eq:deconvolutionProblem}.
       The solution of the $\ell_1$-norm minimization problem for a computational grid
       which includes points $\xi_1$, $\xi_2$ and $\xi_3$ enables the
       exact recovery of the peak positions. The peaks
of  ${\mu}$ are at locations $\xi_1=0.2$, $\xi_2
= 0.533$ and $\xi_3 = 0.8$ and their amplitudes are $\gamma_1 =
0.5$, $\gamma_2 = 0.9$ and $\gamma_3 = 0.7$ respectively. The number
of grid points used here was $N=16$ ( $h = 0.1$).}
       \label{fig:ExactSupport}
    \end{center}
\end{figure}

Now we can consider the case of Theorem
\ref{thm:spikes_between_grid_points} where the three peaks of
${\mu}$ are located between the grid points.
Figure~\ref{fig:NonExactSupport} and~\ref{fig:NonExactSupport_2}
depict the results for two different grids of size $N=16$ and $N=51$
respectively. The numerical solutions yield to either
two peaks around the location of an original peak or a single peak
close to the original one as one expects. 

To approximate the amplitude and location of the underlying peaks we
used equation~\eqref{eq:PeakAmplitude} and \eqref{eq:PeakLoc} respectively.  In
particular, for the approximation of a peak located at $\xi_l \in
(x_{k},x_{k+1})$ with amplitude $\gamma_l$, if there exist two nonzero coefficients $c_k$ and $c_{k+1}$
 at  points $x_k$ and $x_{k+1}$
respectively, then from~\eqref{eq:PeakAmplitude} follows that
\begin{equation}\label{eq:amplitude_1D}
  \hat{\gamma}_l = c_k+c_{k+1}+\frac{\lambda\;s}{H(0)}\;,
\end{equation}
If $c_k>0$ and $c_{k+1}>0$,  $s=1$.

 Based on
equation~\eqref{eq:PeakLoc},  the peak location is approximated as
\begin{equation} \label{eq:PeakLoc_1D}
   \hat{\xi}_l = \frac{1}{2} (x_k+x_{k+1}) +\frac{c_{k+1}-c_k}{2\hat{\gamma}_l} (x_{k+1}-x_k)\;.
\end{equation}

Terms that include $(G\ast\tilde{f})$ in
equation~\eqref{eq:PeakAmplitude} and \eqref{eq:PeakLoc} has been eliminated  from~\eqref{eq:amplitude_1D} and
~\eqref{eq:PeakLoc_1D} since we use only the neighboring point contributions to recover the amplitude and position of the underlying peaks.

We note that  in the case where the numerical solution yields to
a single nonzero coefficient  $c_k$ at $x_k$, then
$\hat{\gamma}_l=c_k$ and $\hat{\xi}_l = x_k$.
\begin{figure}[!htb]
    \begin{center}
       \includegraphics[width=0.66\textwidth]{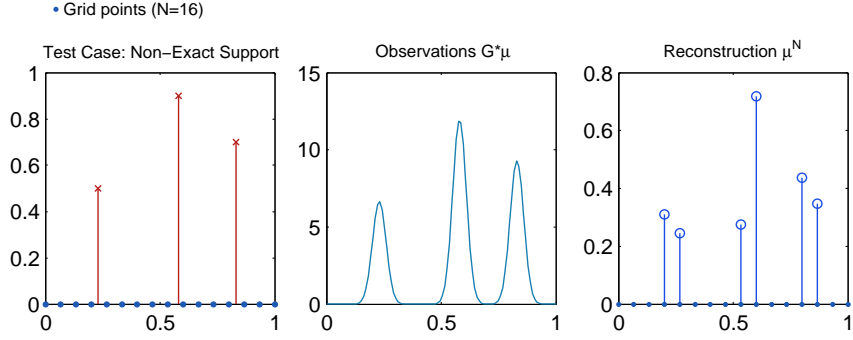}
       \caption{Left image: Original peaks, middle image: observations and right image: solution of the discrete convex problem
       \eqref{eq:deconvolutionProblem}.
       The solution of the $\ell_1$-norm minimization problem for a computation grid
       which does not include points $\xi_1$, $\xi_2$ and $\xi_3$ gives, as a solution, pairs of peaks adjacent to location of the original peak.}
       \label{fig:NonExactSupport}
    \end{center}
\end{figure}
\begin{figure}[!htb]
    \begin{center}
       \includegraphics[width=0.66\textwidth]{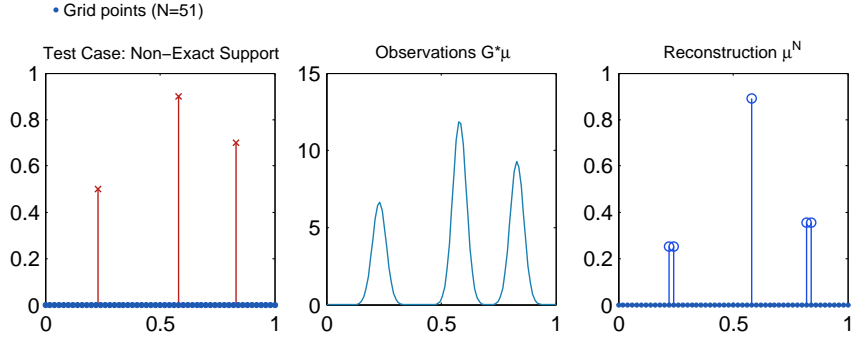}
       \caption{Similar as Figure~\ref{fig:NonExactSupport}, however finer grid is used in the computation.
       The solution yields two pairs of peaks around $\xi_1$ and  $\xi_3$ and a single peak close to $\xi_2$ }
       \label{fig:NonExactSupport_2}
    \end{center}
\end{figure}
Table~\ref{table:NonExactSupport} summarizes the values of the
amplitudes and locations of the original and estimated peaks for the
two different computational grids of Figure~\ref{fig:NonExactSupport} and \ref{fig:NonExactSupport_2}. Based on these results,
the reconstructions in a fine grid are slightly more accurate than ones  obtained using a coarse grid, which is inline with the a-posteriori error analysis presented in section \ref{section error
estimation}.
\begin{table}[!htb]
\centering
\begin{tabular}[c]{*7{m{8mm}}}
     \multicolumn{2}{c }{True}& \multicolumn{2}{c }{N=16}& \multicolumn{2}{c }{N=51}
    \\
    \hline \multicolumn{1}{c} {$\xi$}&\multicolumn{1}{c} {$\gamma$}& \multicolumn{1}{c} {$\hat{\xi}$}&\multicolumn{1}{c} {$\hat{\gamma}$}  &\multicolumn{1}{c}
    {$\hat{\xi}$} & \multicolumn{1}{c} {$\hat{\gamma}$}
\Tstrut \Bstrut
    \\
\hline
    \multicolumn{1}{c} {0.23}& \multicolumn{1}{c} {0.5} & \multicolumn{1}{c}{0.2312 }& 0.55&\multicolumn{1}{c}
    {0.2301}& 0.50
    \\
    \multicolumn{1}{c} {0.58}&\multicolumn{1}{c} {0.9} &\multicolumn{1}{c} {0.5751} & 0.92 & \multicolumn{1}{c}
    {0.5800}& 0.91
    \\
    \multicolumn{1}{c}{0.83}&\multicolumn{1}{c} {0.7}  &\multicolumn{1}{c}{0.8312}& 0.77   & \multicolumn{1}{c}{0.8300} & 0.71\\
    \hline
\end{tabular}
\caption{This table summarizes the results of the test cases
illustrated in Fig.~\ref{fig:NonExactSupport} and
Fig~\ref{fig:NonExactSupport_2}. The first and second column show
the locations $\xi$ and amplitudes $\gamma$ of the underlying peaks,
then there are the estimated locations and amplitudes for the cases
where a coarse grid (N=16 points) and a fine grid (N=51 points) were
used.} \label{table:NonExactSupport}
\end{table}

\subsubsection{2D spaces}
In higher dimensions, the  exact signal is $\mu =
\sum_{l=1}^L\gamma_l\delta_{\xi_l}$ where $\xi_l\in\Omega\subset
\mathbb{R}^d$ ($d>1$) and the estimated $\mu^N$ solution has
nonzero values clustered in grid points around the locations of
the actual peaks $\xi_l$. If a cluster of grid points  with
nonzero coefficients around peak $\xi_l$ is denoted by
$\mathcal{N}_l=\{x_{k}\}_{1:N_l}$, we approximate the peak
amplitude according to
\begin{equation}\label{eq:Peak_AmplitudeND}
\hat{\gamma}_l = \sum_{x_k\in\mathcal{N}_l} c_k+\frac{\lambda \;
s}{H(0)}\;.
\end{equation}
The location is approximated similarly as in
\eqref{eq:PeakLoc}. Particularly, if there are at least two grid
points $x_i$ and $x_j$ $\in\mathcal{N}_l$,  we have
\begin{equation}\label{eq:PeakLoc_ND}
\begin{split}
  \hat{\gamma}_l  (x_i -x_j)^\mathrm{T} \nabla^2  H(0) \hat{\xi}_l
    =& \frac{\hat{\gamma}_l}2  \left( x_i^\mathrm{T} \nabla^2  H(0)   x_i -x_j^\mathrm{T} \nabla^2  H(0)   x_j\right)  \\-&
\frac{1}2 \sum_{x_k \in {\cal N}_l} c_k  \left(F(x_i)-
F(x_j)\right),
   \end{split}
\end{equation}
where $F(x)=(x -x_k)^\mathrm{T} \nabla^2  H(0)  (x -x_k)$.
 Having ${N}_l>d$ different grid points in
$\mathcal{N}_l$, we can approximate the location of $\xi_l$ solving
a set of equations
(\ref{eq:PeakLoc_ND}) which are constructed by selecting one $x_i$ at a time and taking differences to all other $x_j\in\mathcal{N}_l$.

As an example here we have a low resolution image produced as the
convolution of four peaks with a Gaussian kernel in a two
dimensional space. The domain is $\Omega=[0,1]^2$ (left image of
Figure~\ref{fig:2D_example_PeakEstimation}) and the selected
computational grid is of size $N=20\times 20$ (depicted as small
blue dots in the middle and right images of
Figure~\ref{fig:2D_example_PeakEstimation}). The middle image of
Figure~\ref{fig:2D_example_PeakEstimation} shows the numerical
result obtained solving the $\ell_1$-norm minimization problem. The
intense blue circles illustrate the locations where nonzero entries
appeared. We can observe that there are four distinctive clusters of
grid points with nonzero coefficients.

Therefore, four
peaks are approximated, one for each cluster using
\eqref{eq:PeakLoc_ND}. The peak approximations are shown in the
right image of Figure~\ref{fig:2D_example_PeakEstimation}.
\begin{figure}[]
    \begin{center}
       \includegraphics[width=0.65\textwidth]{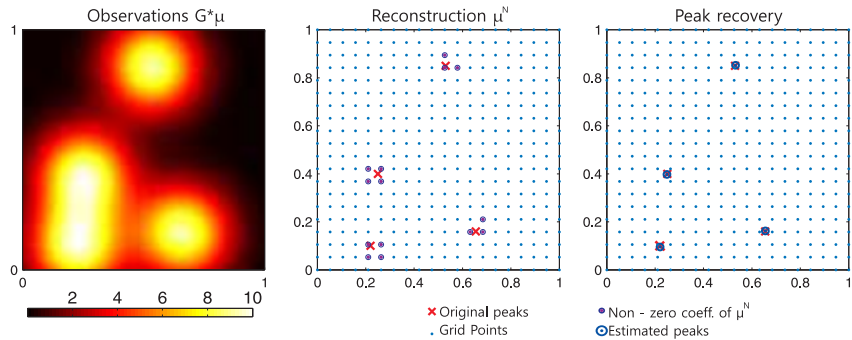}
       \caption{Left image: Observations (convolution with a Gaussian kernel with standard deviation 0.13), middle image: numerical solution $\mu^N$ estimated solving the finite dimensional $\ell_1-$norm problem (with $\lambda = 0.001\lambda_\mathrm{max}$) on a uniform grid $20\times 20$, and right image:
       approximation of the peak locations using the clusters of nonzero coefficients obtained from $\ell_1-$norm minimization.}
        \label{fig:2D_example_PeakEstimation}
    \end{center}
\end{figure}
Table~\ref{table:2D_Example} summarizes the results of the numerical
solution $\mu^N$ and the corresponding approximate peaks $(\hat{\gamma},\hat{\xi})$.

\begin{table}[!htb]
\centering
\begin{tabular}[c]{*4{m{6mm}}}
    \multicolumn{2}{c}{Location}& \multicolumn{2}{c}{Amplitude}
    \\
    \cline{1-4}
    \multicolumn{1}{c} {$\xi$}& \multicolumn{1}{c}{$\hat{\xi}$}  & \multicolumn{1}{c} {$\gamma$} & \multicolumn{1}{c}{$\hat{\gamma}$} \Tstrut\Bstrut \\
    \cline{1-4}
    \multicolumn{1}{c}{(0.22,0.10)} &  \multicolumn{1}{c}{(0.2204,0.0950)}  &\multicolumn{1}{c}{1}  &\multicolumn{1}{c}{0.99} \\
    \multicolumn{1}{c}{(0.66,0.16)}  &  \multicolumn{1}{c}{(0.6557,0.1620)} &\multicolumn{1}{c}{1} &\multicolumn{1}{c}{1.10}   \\
    \multicolumn{1}{c}{(0.53,0.85)}  &  \multicolumn{1}{c}{(0.5323,0.8525)  } &\multicolumn{1}{c}{1}  &\multicolumn{1}{c}{1.02}  \\
    \multicolumn{1}{c}{(0.25,0.40)}  &  \multicolumn{1}{c}{(0.2487,0.3977) } &\multicolumn{1}{c}{1}   &\multicolumn{1}{c}{1.11}      \\
    \hline
\end{tabular}
\caption{ Locations, $\xi,$ and $\hat{\xi}$ and amplitudes, $\gamma$ and  $\hat{\gamma}$ of  the original and estimated peaks  respectively for the case presented in
Figure~\ref{fig:2D_example_PeakEstimation}.}
\label{table:2D_Example}
\end{table}
\section{Recoveries in the case of multiple peak signals}\label{sec:AdaptiveSuperResAlg}
A coarse computational grid, even though reduces the computational
cost, imposes some limitations to detect and separate neighbouring
peaks. For example, there is always a possibility that there are
more than one positive peak in an interval between two grid points
(see one dimensional example of Figure~\ref{fig:MultiplePeaks}.A),
or two or more original peaks may be located in adjacent intervals (e.g.
Figure~\ref{fig:MultiplePeaks}.B). Then the numerical solution
$\mu^N$ of the $\ell_1$-norm minimization
problem may not be accurate enough.

Following similar analysis as in Theorem
\eqref{thm:spikes_between_grid_points},
we can easily show that there exists a $\ell_1$-norm solution
$\mu^N=a\delta_{x_k}$ with
$a=\frac{\sum_{l=1}^{L}\gamma_l H(x_k-\xi_l)-\lambda}{H(0)}$ as depicted in
Figure~\ref{fig:MultiplePeaks}.A even though the original peaks are
two. Additionally, when the original peaks are distributed between
two intervals then we can expect up to three reconstructed peaks as
in Figure~\ref{fig:MultiplePeaks}.B and
Figure~\ref{fig:MultiplePeaks}.C.
\begin{figure}[!htb]
    \begin{center}
       \includegraphics[width=0.76\textwidth]{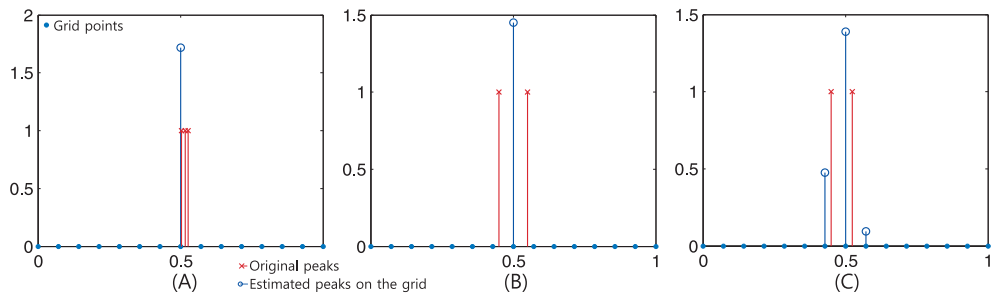}
       \caption{
In this figure, we show the peak recovered when there are more than one original peaks.
       (A) A single peak reconstruction (in blue) when the original peaks (in red) are very close to a grid point $x_k$  (B) A single peak reconstruction (in blue) when the original peaks (in red) are symmetrically located with respect to a
       grid point. (C) Three peak reconstruction (in blue) when there are two original peaks (in red) on adjacent intervals.}
        \label{fig:MultiplePeaks}
    \end{center}
\end{figure}
A natural way to improve the estimates is by refining the grid.
 Figure~\ref{fig:IntroAdaptiveAlgo} illustrates how
by performing local refinements on the grid (and fitting the
input data with a solution in the updated grid), we can achieve a
separation of the
underlying peaks.

\begin{figure}[h!]
    \begin{center}
    \includegraphics[width=1.05\textwidth]{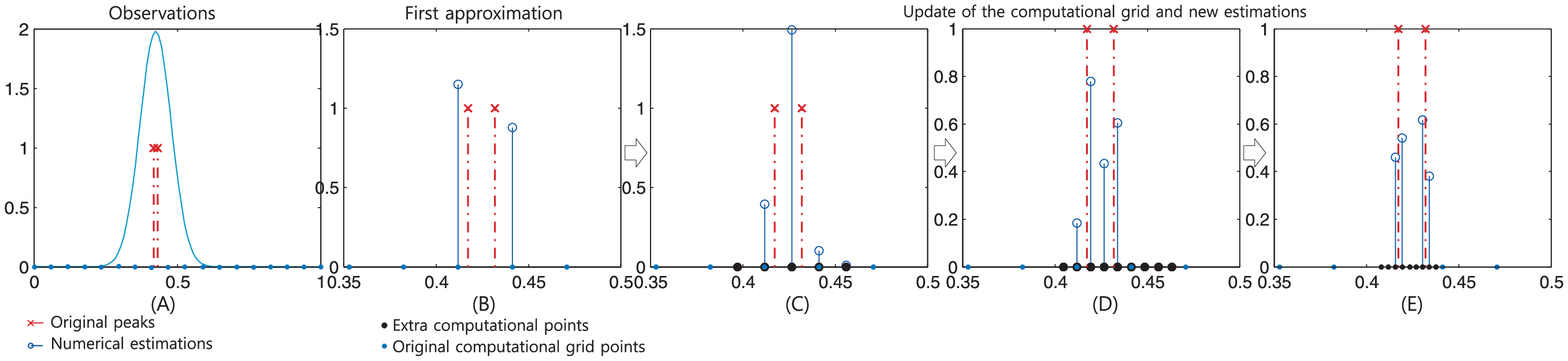}
       \caption{By restricting and refining iteratively the computational grid around the nonzero coefficients of the numerical solution $\mu^N$
       ,
       we can determine small disjoint intervals where the two original peaks (in red) belong to and thus separate them. Fig.(A) shows the observations, original peaks (in red) and the locations of the used computational grid (small blue dots).
 Fig.(B)-(E) illustrate the reconstructed peaks (in blue) on a grid that is updated based on the previous numerical solution.}
       \label{fig:IntroAdaptiveAlgo}
    \end{center}
\end{figure}

\subsection{Adaptive super-resolution for sparse signal}
\subsubsection{Overview}
In the following context, to ease our analysis and  to proceed with the domain refinement in higher dimensions instead of using grid/points
we use the mesh/nodes notion as in the finite element methods.
Hence, the computational domain can be described by a mesh consisting of a
set of nodes (equivalent to grid points) and elements (e.g. line
segments in one dimension or triangles in
two dimensions). 
The proposed super-resolution approach consisting of the following
steps:
\begin{description}
  \item[a)] Solve the $\ell_1$-norm minimization
problem (\ref{eq:deconvolutionProblem}) on a set of given nodes
using a non-smooth convex solver;
  \item [b)] Define a new (restricted) computational domain using the
 nodes (locations) corresponding to the nonzero coefficients of the
estimated numerical solution (\ref{eq:numericalSolution}). To do
that:
\begin{enumerate}
        \item Remove the elements where all their nodes are assigned to zero coefficients;
        \\Cluster all the remaining elements. A cluster is defined by a set of pairwise connected elements (elements that share an edge or surface);
        \item  Refine the computational domain at the estimated
        clusters by including extra nodes (i.e. centroids of the elements).
        Include only the extra nodes that satisfy a distance limit from the existing nodes.
        \item  For each cluster, use the old and  additional nodes to produce a mesh.
        The new (fragmented) computational  domain consisting of all the  disjoint clusters;
\end{enumerate}
\item [c)]
Repeat step a-b until domain stops updating i.e.
 the distance between the existing and additional nodes
becomes sufficiently small\footnote{For example, determine a minimum distance for the nodes using limits presented in
\cite{Poon2018,Poon2019} or prior information about the size of the compressible signal.};
  \item [d)]Based on the last numerical solution (step c),
for each separate cluster, estimate a single peak i.e. amplitude
\eqref{eq:Peak_AmplitudeND} and location using the coordinates of
the nodes corresponding to the nonzero coefficients of the numerical
solution and the set of equations stemming from equation
(\ref{eq:PeakLoc_ND}).
\end{description}
A more analytical description of the approach is given in
Appendix~\ref{sec:Implementation}. We need to mention that the proposed scheme is applicable for
peaks of similar sign or when the positive and negative peaks
satisfy the separation criteria (which is based on the kernel's
 width and noise type/level) as studied in some cases  for example in
\cite{Duval2015b,Poon2018}.

\section{Results and Discussion}\label{sec:results}
To demonstrate how the proposed  superresolution approch can be used, we
reconstruct super-resolved images from (low resolution) observations
which are the convolution of an original sequence of sparse Dirac
delta functions with  Gaussian kernels. In this section, we present
technical details about the simulated data, the proposed
super-resolution approach and the validation metrics used for the
comparison between the original peaks and the estimated ones. Then,
we show examples  how the proposed scheme progressively
localizes a different number of peaks, which can be either only
positive or positive and negative. Finally, we  discuss further extensions and possible applications.

\subsection{Simulated data}
The simulations were carried out in a two dimensional square domain
$\Omega =[0,1]^2$. The aim was to approximate the locations and
amplitudes of an original signal $\mu=\sum_{l=1}^L\gamma_l
\delta_{\xi_l}$ from low resolution images $W\in\mathbb{R}^{M\times
M}$ where
$$w_{j_1j_2} =\sum_{l=1}^L%
\gamma_l  G(\xi_l-x_{j_1j_2})+\varepsilon_{j_1j_2}\;,$$ for
$j_1,j_2=1,\ldots,M$ (where $\xi_l\neq x_{j_1j_2}\forall j_1,j_2$).

As a convolution kernel, we use the one from study \cite{Falcon},  given by
 $$G(x)\propto\alpha
\exp{\left(\frac{1}{2}\left(x^\mathrm{T}\Gamma_1^{-1}x\right)\right)}+(1-\alpha)\exp{\left(\frac{1}{2}\left(x^\mathrm{T}\Gamma_2^{-1}x\right)\right)}\;,$$ 
with $\alpha=0.2$,
 covariance matrices $\Gamma_1=\sigma_i I^{2\times 2}$,
$\Gamma_2=\sigma_2 I^{2\times 2}$,
$\sigma_1=2\;h_M$, $\sigma_2=2.5h_M$ and $h_M=1/{M}$.

Also, we considered a low additive
measurement noise
$\varepsilon=\mathrm{sc}\;\bar{\varepsilon}\in\mathbb{R}^{M\times
M}$ where
$\bar{\varepsilon}\in\mathbb{R}^{M\times M}$ was sampled from a
Gaussian distribution with zero mean and variance one. The scaling
parameter $\mathrm{sc}$ was estimated based on the level of the
signal-to-noise ratio (SNR), $\mathrm{SNR}=
10\log_{10}\frac{\sum_{j_1,j_2=1}^M(\sum_{l=1}^L
\gamma_lG(\xi_l-x_{j_1j_2}))^2}{\sum_{j_1,j_2=1}^M(\varepsilon_{j_1j_2})^2}$.
In the following simulations, we used $\mathrm{SNR}=40\;\mathrm{dB}$.

\subsection{Details about the adaptive superresolution approach}
We estimated the locations and the amplitudes of the underlying
peaks by employing the proposed scheme of
section~\ref{sec:AdaptiveSuperResAlg}.
The initial estimation (by solving the $\ell_1$-norm minimization) was performed in a uniform mesh
of $N\times N$ nodes. Then, the mesh was
updated automatically around the nonzero entries of vector $c$. In
practice, to avoid small numerical inaccuracies, the new domain was
defined by keeping the nodes  with 
absolute values of the
estimated peaks  greater than a small threshold (i.e. 0.5\% of
the maximum $|c|$ of vector $c$).
In the current implementations, the $\ell_1$-norm minimization problem
was solved using the
hierarchical adaptive lasso (HAL) \cite{Murphy2012}.
Other algorithms e.g.\cite{Boyd2017,Kim2007} could be used as well. Here, we used HAL to  reduce the amplitude shrinkage of  the estimated nonzero coefficient given a regularization parameter $\lambda$. In the following examples, $\lambda = 0.1\lambda_\mathrm{max}$.
Moreover, the incorporation of a Bregman iteration \cite{Yin08bregmaniterative} could be considered in the future for  cases with relative high measurement noise.

The updates of the computational support terminated when the distance between the existing nodes and the additional nodes
 became small.
  In the following examples, we used as a criterion for adding  a new node, the minimum distance of this candidate node from the existing nodes, $h_\mathrm{min}=0.25 h_M$ (approximately $0.125$ of the Gaussian kernel's standard deviation).
This choice was made to enable a computational efficiency (i.e. a reasonable number of iterations) and to allow a good approximation of the peaks using   small clusters of the nonzero coefficients and (\ref{eq:PeakLoc_ND}).

\subsection{Comparison metrics}
In tests with only few peaks, we used:
\begin{itemize}
  \item The mean localization error (MLE)
  between the original and reconstructed peaks which is defined
\begin{equation}\label{eq:MLE}
\mathrm{MLE} = \frac{1}{{L}} \sum_{{l}=1}^{\hat{L}} \min_d{\left(
d(\xi_l,\hat{\xi}_{\hat{l}})\right)_{\hat{l}=1:\hat{L}}},\end{equation}
\noindent where $d(\xi,\hat{\xi}_{\hat{l}})
=\|\xi-\hat{\xi}_{\hat{l}}\|_2$,
 $\hat{L}$ is the number of the reconstructed peaks $\hat{\xi}_{\hat{l}}$ and $L$ the total number of the original peaks.
  \item The mean strength error (MSE) given by
\begin{equation}\label{eq:MSE}
\mathrm{MSE} = \frac{1}{L} \sum_{l=1}^{L}
\|\gamma_l-\hat{\gamma}_{\bar{l}}\|\,\,\mbox{where }
\bar{l}:=\min_d{
{\left(d(\xi_l,\hat{\xi}_{\hat{l}})\right)_{\hat{l}=1:\hat{L}}}}.
\end{equation}
\end{itemize}
For  dense distributions of peaks, we employed the
 earth mover's distance (EMD) (Wasserstein metric) as a measure of dissimilarity between the original and the
 estimated peaks locations
\cite{Rubner2000,pel09}.

\subsection{Examples}
Three different examples are presented to demonstrate the different
stages of the proposed scheme. In the first example, we show step
by step the estimation of the locations and amplitudes of five
positive peaks (see Figure~\ref{fig:5Source_noNoise}). In the second
example, presented in Figure~\ref{fig:4Sources_noNoise_Pos_Neg}, we
use the proposed apprach to recover both positive and negative
peaks. In the last example in Figure~\ref{fig:tubuli}, we illustrate
the potential of the algorithm to deal with denser peak
distributions.
\begin{figure*}[h!]
    \begin{center}
       \includegraphics[width=0.84\textwidth]{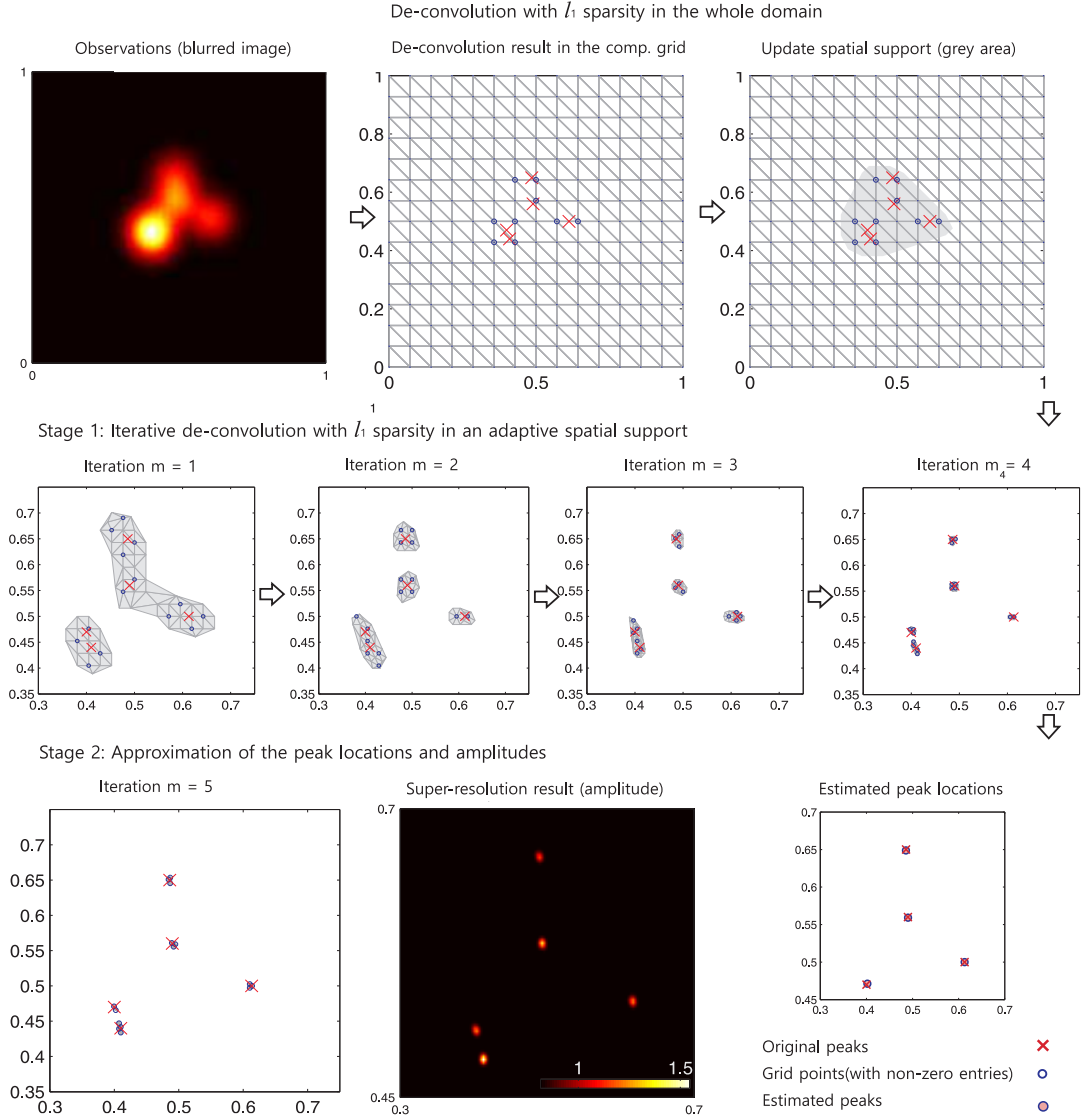}
       \caption{Reconstruction of five positive peaks using the
       proposed adaptive super-resolution algorithm in a $[0,1]^2$ domain.  Top row: the left
image shows the observations on a $[40\times 40]$
       grid, the middle image shows the numerical solution of iteration $m=0$ obtained solving the finite dimensional $\ell_1$-norm
       minimization problem on a computational $[15\times 15]$ domain. The right image depicts  the new computational support in light grey color.
       Middle row: the left image shows the
       numerical solution of  iteration $m=1$ and the new computational support in light grey. Accordingly, the middle and right images show the
       numerical solution and the computational support of iteration $m=2$ and $m=3$ respectively.
       Bottom row: the left image shows the result of the last iteration, the middle image is the super-resolved  results and  the right image shows the exact and the estimated peaks
       image. Please note the limits of the axes have been updated in the images in order to focus in the area that the point  sources are located.
       }
        \label{fig:5Source_noNoise}
    \end{center}
\end{figure*}

 In the example of Figure~\ref{fig:5Source_noNoise} and~\ref{fig:4Sources_noNoise_Pos_Neg}, the
low resolution images were of size $[{M}\times {M}] = [40\times 40]$.
The first estimation solving the $\ell_1$- norm minimization problem
was performed in a uniform computational mesh (with $[{N}\times {N}]
= [15\times 15]$ number of nodes) as we can observe in the top row,
middle image of Figure~\ref{fig:5Source_noNoise}.
In particular, in Figure~\ref{fig:5Source_noNoise} along the top
row, starting from left to right, we can observe the initial low
resolution image, next the numerical solution of the first iteration (denoted by $m=0$)
 and then, the new
computational domain after the first estimation marked with gray
color. The nodes corresponding to the nonzero entries for vector $c$
on the computational meshes were denoted with small blue circles and
the original peaks were marked with
$\textcolor[rgb]{1.00,0.00,0.00}{+}$.
The new computational area was defined using the
elements where the blue circled nodes belonged to. The second row of
Figure~\ref{fig:5Source_noNoise} illustrates the nonzero locations
estimated by solving the minimization problem
\eqref{eq:mRecursionMinimizationProblem} and the corresponding
updated computational supports for the first, second, third and forth
iteration of the proposed approach. In the last row, we can observe the
$\ell_1$-norm estimation for the last iteration, the
final high resolution image and the approximation of the peaks using
the coordinates of the nonzero coefficients at the last iteration.

The reconstruction results (amplitudes and locations) are summarized
in Table~\ref{table:5Source_noNoise}. Based on them, we have that
all the five peaks were recovered and their values are very close to
the exact values.
%
\begin{table}[h!]
\centering
\begin{tabular}[c]{*4{m{6mm}}}
    \multicolumn{2}{c}{Location}& \multicolumn{2}{c}{Amplitude}
    \\
    \multicolumn{1}{c} {$\xi$}& \multicolumn{1}{c}{$\hat{\xi}$}  & \multicolumn{1}{c} {$\gamma$} & \multicolumn{1}{c}{$\hat{\gamma}$}\\
    \cline{1-4}
    \multicolumn{1}{c}{(0.195,0.58)} &  \multicolumn{1}{c}{(0.198,0.58)}  &\multicolumn{1}{c}{1}  &\multicolumn{1}{c}{0.99} \\
    \multicolumn{1}{c}{(0.18,0.72)}  &  \multicolumn{1}{c}{(0.18,0.72)} &\multicolumn{1}{c}{1.5} &\multicolumn{1}{c}{1.51}   \\
    \multicolumn{1}{c}{(0.48,0.46)}  &  \multicolumn{1}{c}{(0.48,0.45)  } &\multicolumn{1}{c}{1}  &\multicolumn{1}{c}{1}  \\
    \multicolumn{1}{c}{(0.72,0.38)}  &  \multicolumn{1}{c}{(0.71,0.38) } &\multicolumn{1}{c}{1}   &\multicolumn{1}{c}{0.98}      \\
    \multicolumn{1}{c}{(0.64,0.36)}  &  \multicolumn{1}{c}{(0.65,0.37) } &\multicolumn{1}{c}{1.2}   &\multicolumn{1}{c}{1.2}      \\
    \cline{1-4}
    \multicolumn{1}{c} {MLE:}        &   \multicolumn{1}{c}{0.0049}        &\multicolumn{1}{c} {MSE:} &\multicolumn{1}{c}{0.013} \\
    \hline
\end{tabular}
\caption{This table summarizes the results of the test case in
Fig.~\ref{fig:5Source_noNoise}. The first and the third columns show
the locations and the amplitudes of the original peaks respectively
and the second and forth column, the corresponding estimated values
using the proposed approach. The last row shows the values of the
MLE (equation \ref{eq:MLE}) and MSE (equation \ref{eq:MSE})}

\label{table:5Source_noNoise}
\end{table}

%
To show that the proposed scheme can be used to recover both positive and negative peaks,
Figure~\ref{fig:4Sources_noNoise_Pos_Neg} illustrates the estimation
of two positive and two negative peaks.
The original peaks are marked with
$\textcolor[rgb]{1.00,0.00,0.00}{+}$ for the positive peaks and
$\textcolor[rgb]{1.00,0.00,0.00}{\mathrm{x}}$ for the negative
peaks and the estimated ones with circles and squares respectively in the right-hand side image of Figure~\ref{fig:4Sources_noNoise_Pos_Neg}.
Table~\ref{table:PosNegPeaks} summarizes the values of the location
and the amplitude of the original and the estimated peaks of the example in Figure ~\ref{fig:4Sources_noNoise_Pos_Neg}. The
estimated locations and amplitudes are very close to the original
ones with small values for the metrics MLE and MSE for this setup.
\begin{figure}[h!]
  \begin{center}
       \includegraphics[width=0.7\textwidth]{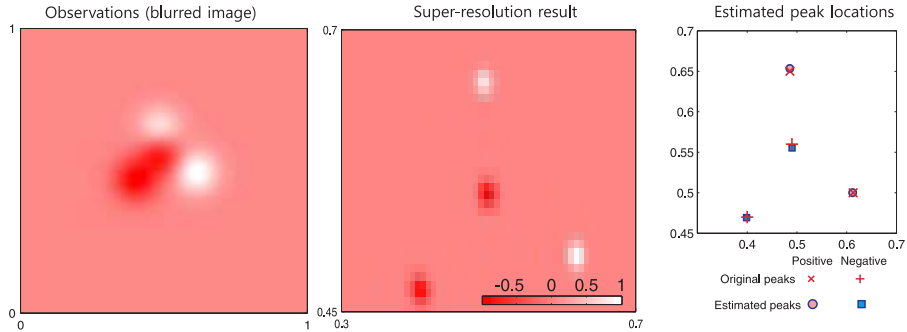}
       \caption{Positive and negative peak reconstructions. Starting from left,
 the original low resolution image, estimated high resolution image and the locations of the actual and estimated peaks.
 Please note the limits of the axes have been updated in the middle and right image in order to focus in the area that the point sources are located
}
        \label{fig:4Sources_noNoise_Pos_Neg}
    \end{center}
\end{figure}
\begin{table}[h!]
\centering
\begin{tabular}[c]{*4{m{6mm}}}
    \multicolumn{2}{c}{Location}& \multicolumn{2}{c}{Amplitude}
    \\
    \multicolumn{1}{c} {$\xi$}& \multicolumn{1}{c}{$\hat{\xi}$}  & \multicolumn{1}{c} {$\gamma$} & \multicolumn{1}{c}{$\hat{\gamma}$}\\
    \cline{1-4}
    \multicolumn{1}{c}{(0.49,0.56)} &  \multicolumn{1}{c}{(0.4857,0.556)}  &\multicolumn{1}{c}{-1}  &\multicolumn{1}{c}{-0.9886} \\
    \multicolumn{1}{c}{(0.486,0.65)}  &  \multicolumn{1}{c}{(0.4857,0.6533)} &\multicolumn{1}{c}{0.8} &\multicolumn{1}{c}{0.7528}   \\
    \multicolumn{1}{c}{(0.4,0.47)}  &  \multicolumn{1}{c}{(0.3989,0.4694)  } &\multicolumn{1}{c}{-1}  &\multicolumn{1}{c}{-0.9739}  \\
    \multicolumn{1}{c}{(0.613,0.5)}  &  \multicolumn{1}{c}{(0.612,0.5) } &\multicolumn{1}{c}{1}   &\multicolumn{1}{c}{1.0076}      \\
    \cline{1-4}
    \multicolumn{1}{c} {MLE:}        &   \multicolumn{1}{c}{0.0025}        &\multicolumn{1}{c} {MSE:} &\multicolumn{1}{c}{0.0231} \\
    \hline
\end{tabular}
\caption{This table summarizes the results of the test case
illustrated in Fig.~\ref{fig:4Sources_noNoise_Pos_Neg}. The first
and the third columns show the locations and the amplitudes of the
original peaks respectively and the second and forth column, their
estimated values using the proposed approach. The last row shows
the values of the MLE (equation \ref{eq:MLE}) and MSE (equation
\ref{eq:MSE})
} \label{table:PosNegPeaks}
\end{table}
\begin{figure*}[h!]
    \begin{center}
       \includegraphics[width=0.76\textwidth]{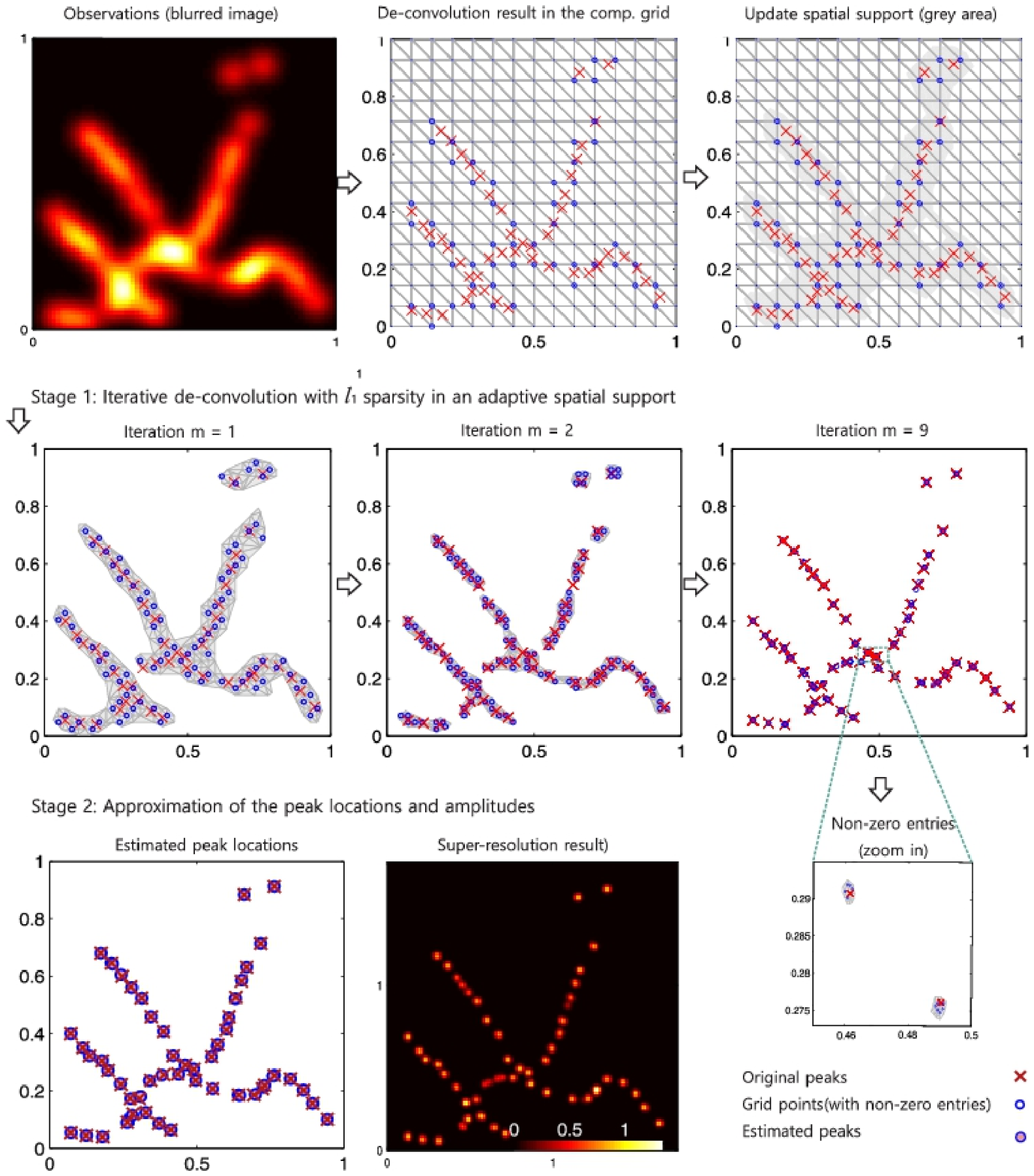}
       \caption{Reconstruction of multiple peaks. The uppermost left images shows the original low resolution observations and similarly as it was described in
       Figure~\ref{fig:5Source_noNoise}, we present the progressive steps of the approach to separate the underlying peaks.  Also, the image on the lowermost left side
shows  the grid points corresponding to non-zero coefficients in the
clusters (in grey)  of the 9$^{th}$ iteration of the approach}
        \label{fig:tubuli}
    \end{center}
\end{figure*}
We further examined the proposed scheme in the case where
50 peaks of intensity one were simulated.  Figure~\ref{fig:tubuli} presents in a
similar way as Figure~\ref{fig:5Source_noNoise} the progressive
towards the recovery of the peaks. In this test, the
low resolution image was $[{M}\times {M}] = [80\times 80]$ and the
first numerical estimation was performed in a uniform
mesh with  $[15\times 15]$ nodes. Here, the
middle row of Figure~\ref{fig:5Source_noNoise} shows the estimates in the first, second and ninth
iteration.
Also, the small lowermost right image illustrates how the numerical
solution (blue circles) appears in a small area around 2 peaks.
The total number of  recovered
peaks was 49. There is an omission due to the very close proximity of two peaks which appear as a single (more intense) peak in the lower right side of the image ``super-resolution result''. Very few of the peak intensities were more prominent than others.
The EMD as a measure of dissimilarity between the actual and
estimated point sources, in domain  $[0,1]^2$, was $\mathrm{EMD} =
0.01$ (or $1\%$ dissimilarity between the true source distribution and the estimated one).

\begin{figure}[h!]
    \begin{center}
       \includegraphics[width=0.55\textwidth]{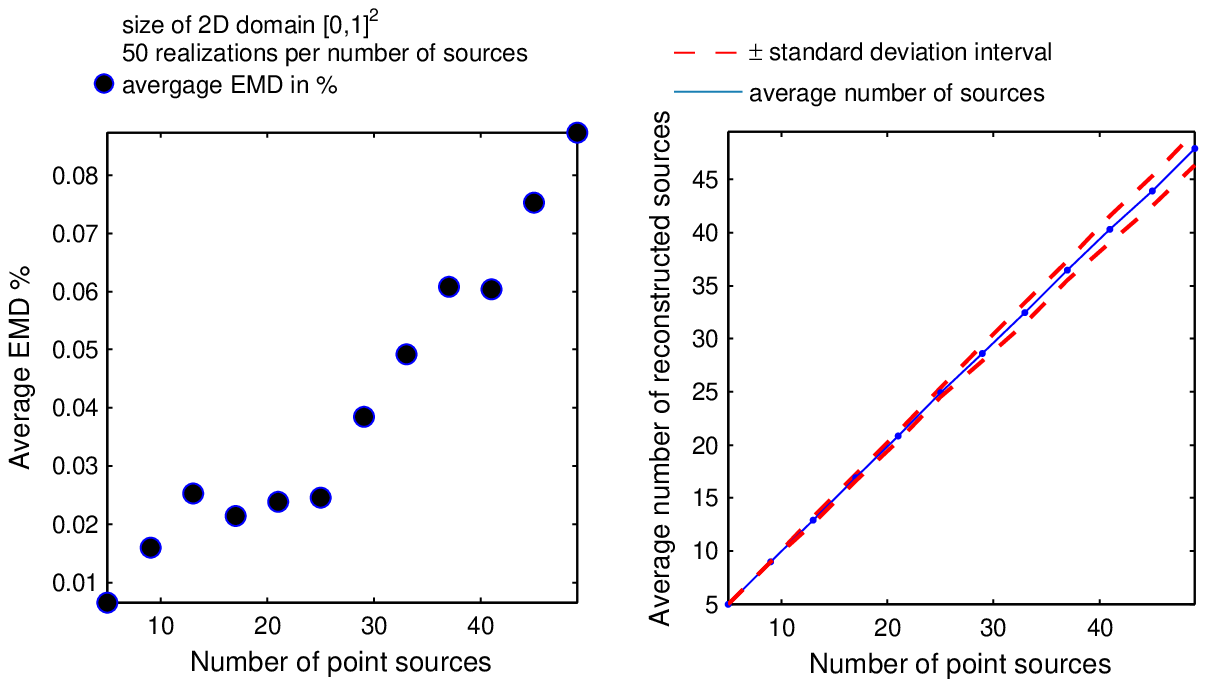}
       \caption{Left image: Average EMD value in \%  and Right image: average number of sources for increasing number of simulated sources.
        50 different realization per number of source were simulated.}
        \label{fig:EMD}
    \end{center}
\end{figure}
\begin{figure}[!h]
    \begin{center}
       \includegraphics[width=0.55\textwidth]{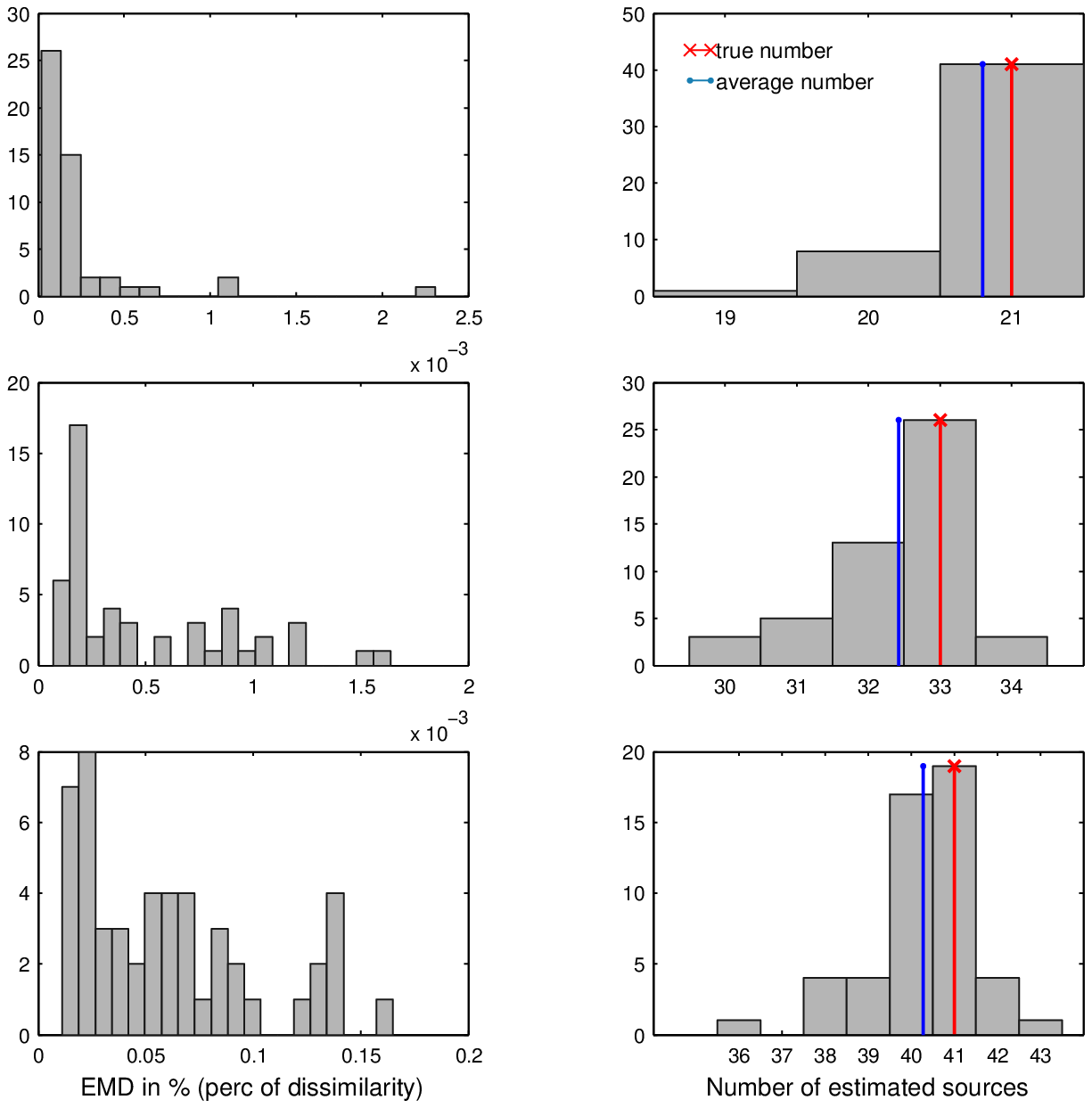}
       \caption{Histograms of estimated EMD values (left columns) and number of reconstructed sources (right column)
when the number of simulated sources was 21, 33, 41 respectively
starting from the top row.}
        \label{fig:Histograms}
    \end{center}
\end{figure}
Finally by keeping the noise level at 40dB, we performed
reconstructions using sets of low resolution images obtained from
the convolution of randomly distributed sources with the same
Gaussian kernel ($\alpha=0.2$). The domain and the properties of the kernel were
the same as in the test of Figure~\ref{fig:tubuli}. Particularly,
given the number of point sources, 50 randomly created source
distributions were generated to produce 50 low resolution images. In
Figure~\ref{fig:EMD} we have the average EMD values in $\%$
estimated by comparing the true point sources with the estimated
ones (left image) and the average number of reconstructed sources
(right image) for increasing number of point sources.

Moreover, based on the histograms of Figure~\ref{fig:Histograms} we
can observe when the  number of point sources is low the localization
error expressed through the EMD value is also low whereas when the number
of point sources increases we have larger variation in the EMD and source number estimates.
This can be explained by the very close proximity of some (true) point sources that
can occur more likely when their number increases in the confined domain $[0,1]^2$. This can lead to  difficulties in separating some of the point source from each other.
Overall, our demonstrations indicate that, in a low noise regime,
the proposed superresolution adaptive scheme can recover as many peaks as the exact number of them in most of the cases
when their distance does not violate
an underlying separation condition e.g. minimum distance
$\Delta_{min}\geq C\sigma$ for Gaussian kernels \cite{Poon2018}. Our
numerical simulations showed that, when the noise level was SNR=40dB, the correct
number of peaks could be recovered if $\Delta_{min}/\sigma>0.2$ given $h_M$.
We remark that even though  the effect of the measurement noise (either Gaussian or
Poisson) and the measurement/observation sampling can affect the
recovery of a multi-dimensional signal, these mathematical questions admit of different analysis than
the currently addressed questions  and will be considered in a follow-up study.

\subsection{Discussion and future prospects}
Questions regarding the deconvolution of sparse peaks present great mathematical difficulties with some of them investigated  in studies  such as in \cite{Duval2013a,Duval2015,DuvalDec.2015,Denoyelle2016,Poon2019}.
However, even though very important, these studies often 
do not accommodate easily accessible solutions to software developers
and engineers
working on superresolution applications.
The current work aimed to shed light into some of these theoretical
findings and put them into perspective with  practical solutions in superresolution algorithms.

In the theoretical part of this study,
we explained why clusters of
nonzero peaks appear around the locations of the original peaks
when we solve the $\ell_1$-norm minimization problem on a discrete
grid and in which parts of the grid these nonzero peaks are more
likely to appear. Moreover, we showed how the locations of the underlying peaks are connected with the numerical solution using the optimality condition of the $\ell_1$- norm minimization problem.
One important remark here is that the
distribution of the nonzero coefficients of the numerical solution depend
on the properties of the convolution kernel $H=G*G$  \eqref{eq:p(x)} and not on
kernel $G$ \eqref{eq:convolution}. Therefore thinking in more
general terms, for an inverse problem with forward operator $R$, it
is only important that $R^*R$ is a convolution to anticipate a
numerical solution following similar pattern as in the current
problem. This could be true for example in classical tomography
e.g. filtered back-projection \cite{Natterer2001}.

In general, we envision that similar superresolution schemes can be performed in a
wide variety of inverse problems in the fields of geophysics,
astronomy and spectroscopy \cite{Kirsch1996,Engl1996,Kaipio2004}
because many of these applications share the same characteristics
and properties as this deconvolution problem.
However, some new aspects need to be investigated. For example, in
neuroimaging, the EEG source imaging problem, even though it shares
seemingly similarities with the current deconvolution,
problem,  is a severely ill-posed problem where the forward
operator has a singularity and its computational version
has a  matrix with a large null space
\cite{Michel2004}. Therefore, special design of the prior model
(e.g. weighting) is required whereas the expected pattern of the
numerical
reconstructions has to be  studied carefully.

In the application part, the main two novelties of the proposed approach were a) the automatic
adaptation of the computational domain (using elements) and b) the approximation of the
underlying peaks using a numerical approximation of the $\ell_1$
norm optimality condition that stemmed from the findings of our
theoretical analysis. As a natural next step though, within the
microscopy field we expect to compare the proposed approach with
other state-of-the art algorithms \cite{Small2014} which lie either
on the variational or spectral framework, e.g. Alternating Descent
Conditional Gradient Method \cite{Boyd2017} or MUSICAL
\cite{Agarwal2016} respectively.

Moreover, we are considering extension that could improve the
algorithmic performance, for example, the incorporation of a
non-convex step as in \cite{Boyd2017} to possibly speed up 
the convergence.
In that step possibly, the optimality curve \eqref{eq:p(x)}
could guide the update of the computational domain simultaneously in
multiple locations.
 Furthermore, the idea of employing the ensemble
learning or committee method \cite{Murphy2012} (which allows to
estimate a weighted solution  in each discretization level by
solving multiple deconvolution problems in a parallel fashion),
could help to reduce possible bias introduced due to the
regularization or high measurement noise.

\section{Conclusions}\label{sec:conclusion}
The current work bridges the gap
between theoretical studies and implementations of algorithms that impose sparsity constraints on
the signal to be recovered.
First,
we studied theoretically the deconvolution of single peaks using
the $\ell_1$- norm and we confirmed recent observations that a
discrete reconstruction yields to multiple peaks at grid points
adjacent to the location of the actual peak. We showed that by using
these adjacent peaks and the first order optimality condition of
this convex problem, we can obtain a set of linear equations to
approximate the location of the
actual peak.
We quantified the errors between  the continuous (TV) problem (that allows
exact peak recoveries) and the finite $\ell_1$- norm minimization
problem, which designated that the accuracy of the numerical estimates depends on the
discretization that can be improved by applying finer gridding.

Second, using the previous theoretical finding we proposed an
iterative scheme in which automated local refinement on the
computational grid was performed to identify the areas where the
true peaks were located. Then, with the help of the equations from
the optimality condition,  the peak locations and amplitudes were estimated.
Finally, low resolution images, obtained using simulated focal
sources convoluted with a smooth kernel, were used to show that our
approach can increase the spatial resolution by allowing the
separation and localization of these focal  sources.

\appendix

\subsection{Proof of Theorem~\ref{th:1D}}\label{pr:theorem1D}
\begin{proof}
~\\
    Let us first consider that the reconstructed signal is  $\mu^N=\alpha\delta_{x_K}$ with $a>0$.
    When $j=K$, $p_K=1$ in the optimality condition \eqref{eq:opt_l1} since $p_K\in\partial c_K$ and $a=c_K>0$.
    Thus, the optimality condition \eqref{eq:opt_l1}
    reduces to
    \begin{align}
        \lambda &=\gamma H(x_K-\xi) - aH(0) \label{eq:equality_for_c}
                \nonumber \\
         \Leftrightarrow \quad
         a &= \frac{\gamma H(x_K-\xi)-\lambda}{H(0)} \; .
    \end{align}
    Now let us consider the case where $j = K+1$, then \eqref{eq:opt_l1} becomes
    \begin{align} \label{eq:opt_taylor_case}
        \lambda p_{K+1} = \gamma H(x_{K+1}-\xi) - aH(x_{K+1}-x_K) \; .
    \end{align}
    Since, we assumed only one non-zero coefficient of $\mu^N$ at $x_K$,  we need to show that inequality condition $|p_{K+1}|<1$ holds.

    Inserting \eqref{eq:equality_for_c} into \eqref{eq:opt_taylor_case} yields
    \begin{align*}
        \lambda p_{K+1} = \gamma H(x_{K+1}-\xi) - \frac{\gamma H(x_K-\xi)-\lambda}{H(0)} H(h) \; ,
    \end{align*}
    where $h=|x_K-x_{K+1}|$.
    For this equation we consider the second order Taylor expansion of $H$ around
    zero.
    Note that $H'(0)=0$ holds, due to the maximum of $H$ at zero. Therefore, we
    obtain
    \begin{align*}
        \lambda p_{K+1} =\; &\gamma H(0) + \frac{\gamma }{2}H''(0)(x_{K+1}-\xi)^2 \\
        &- \left(\gamma + \frac{\gamma H''(0)}{2H(0)}(x_K-\xi)^2\right) \left(H(0) +
        \frac{1}{2}H''(0)h^2\right) \\
        &+ \lambda + \frac{\lambda}{2H(0)}H''(0)h^2 + \mathcal{O}(h^3) \; ,
    \end{align*}
    which reduces to
    \begin{align*}
        p_{K+1}& = 1 - \frac{\gamma}{2\lambda}H''(0)T+
        \mathcal{O}(h^3) \; ,
    \end{align*}
where $T=(x_K-\xi)^2 -
         (x_{K+1}-\xi)^2 +
h^2\!\left(1-\frac{\lambda}{\gamma H(0)}\right)$.
    Note also that $H''(0)<0$ as $H$ attains its maximum at zero and $\lambda<\gamma H(0)$.\\
    In order to obtain the inequality $|p_{K+1}|<1$, which would prove the
    assertion, $T$ has to be negative.
    This is true if and only if we have
    \begin{align*}
        x_K^2 - x_{K+1}^2 + 2\xi h < h^2\!\left(\frac{\lambda}{\gamma H(0)}-1\right) \; .
    \end{align*}
    This is equivalent to
    \begin{align*}
        2\xi h &< h^2\!\left(\frac{\lambda}{\gamma H(0)}-1\right) + (x_{K+1} + x_K)h \\
        \Leftrightarrow \hspace{1.35cm}
        \xi &< \frac{h}{2}\left(\frac{\lambda}{\gamma H(0)}-1\right) + \frac{1}{2}x_{K+1}
        + \frac{1}{2}x_K \\
        \Leftrightarrow \quad
        \xi - x_K &< \frac{h}{2}\left(\frac{\lambda}{\gamma H(0)}-1\right) + \frac{h}{2}
        \; .
    \end{align*}
    Thus, we obtain
    \begin{equation}
        \label{eq:optIneq_simple_1NonZeror}
        \xi - x_K < \frac{\lambda h}{2\gamma H(0)}  < \frac{1}{2}h \; ,
    \end{equation}
    which is true since
    we have $\lambda <\gamma  H(x_K-\xi)$
    and $H(x_K-\xi) < H(0)$.

Now assume that $h$ is sufficiently small in the latter case and
make the Ansatz $\mu^N = c_K \delta_{x_K\mu^N = } + c_{K+1}
\delta_{x_{K+1}}$. Without restriction of generality  we consider
$\gamma > 0$  hence we look for $c_K > 0$ and $c_{K+1} > 0$, the
other sign is analogous. We extend the vector $c$ by $c_j =0$ for
$j\notin \{K,K+1\}$ and verify that it is a minimizer of $J$ in
\eqref{eq:deconvolutionProblem} by constructing an appropriate
subgradient in the optimality condition  \eqref{eq:opt_l1_orig}.

In particular,  we
have
$$    p_j = \frac{1}\lambda [A^*(f-A c)]_j = p(x_j)$$
where
$$ p(x) =  \frac{\gamma}{\lambda} H(x-\xi) - \frac{c_K}{\lambda} H(x-x_K) - \frac{c_{K+1}}{\lambda} H(x-x_{K+1})\;. $$
The conditions $p_K = p_{K+1}  =1$ lead to the following $2\times 2$
system
\begin{align*}
\lambda &= \gamma H(x_K - \xi) - c_K H(0) - c_{K+1} H(h) \\
\lambda &= \gamma H(x_{K+1} - \xi) - c_K H(h) - c_{K+1} H(0)\;,
\end{align*}
for $c_K$ and $c_{K+1}$. If $h$ is sufficiently small, Taylor
expansion of $H$ around zero yields
\begin{align*}
&H(0) (c_{K} + c_{K+1}) +\frac{h^2}2 H''(0)  c_{K+1} = \\ &\qquad\lambda - \gamma H(0) - \gamma \frac{(x_K-\xi)^2}2 H''(0) +{\cal O}(h^3)  \\
&H(0) (c_{K} + c_{K+1}) +\frac{h^2}2 H''(0)  c_{K} = \\ &\qquad
\lambda - \gamma H(0) - \gamma \frac{(x_{K+1}-\xi)^2}2 H''(0) +{\cal
O}(h^3)\;.
 \end{align*}
From the leading  terms we obtain the solution
\begin{align*}
c_K &= \frac{1}2 \left(\gamma - \frac{\lambda}{H(0)} + \frac{x_K+x_{K+1} - 2\xi}h \right) + {\cal O}(h)\\
c_{K+1} &= \frac{1}2 \left(\gamma - \frac{\lambda}{H(0)} -
\frac{x_K+x_{K+1} - 2\xi}h \right) + {\cal O}(h)\;.
\end{align*}
Note also that $\gamma - c_K - c_{K+1} = \frac{\lambda}{H(0)} +
{\cal O}(h) $.

This implies for $x \notin [x_K,x_{K+1}]$
\begin{align*} p(x) &=  \frac{\gamma}{\lambda} H(x-\xi) - \frac{c_K}{\lambda} H(x-x_K) - \frac{c_{K+1}}{\lambda} H(x-x_{K+1}) \\
&= (\gamma - c_K - c_{K+1})   \frac{H(x-\xi) }{\lambda}  +
\frac{c_K}{\lambda} H'(x-\xi)(x_K - \xi)\\& +\frac{c_{K+1}}{\lambda}
H'(x-\xi)(x_{K+1} - \xi) + {\cal O}(h^2)\; .\end{align*}
For $x-\xi$
small we can again apply Taylor expansion around zero to show that $0
\leq p(x) \leq 1$.

For $x-\xi$ large we find
$$ p(x) = \frac{H(x-\xi) }{H(0)} + {\cal O}(h)\;, $$ and
since $0 \leq \frac{H(x-\xi) }{H(0)}  < 1$ we find $p(x) \in (-1,1)$
for grid size $h$ sufficiently small. Thus, the optimality condition is
satisfied on all grid points.
\end{proof}

\subsection{Adaptive superresolution approach:
implementation}\label{sec:Implementation}

In the following description, index $m=0,\ldots,$ denotes the
$m^\mathrm{th}$-iteration of the adaptive superresolution approach. In $m^\mathrm{th}$
iteration, the computational domain is denoted by
$\Omega^m\equiv(\mathcal{E}^m,\mathcal{N}^m)$ where
 $\mathcal{E}^m$ is the  set that includes all the elements  and
$\mathcal{N}^m=\{x_k^m\}_{1:N^m}$ is the set with the corresponding nodes that describe domain $\Omega^m$. 

  In $m^\mathrm{th}$ iteration
\begin{enumerate}
\item
         We solve the minimization problem
        \begin{equation}\label{eq:mRecursionMinimizationProblem}
              J_{\Omega^{m}}(c)= \frac{1}2\left(\sum_{j=1}^M
\sum_{k=1}^{N^m}c_k G(x_j-x_k^m)-w_j\right)^2 +\lambda\|c\|_1,
        \end{equation}
where the vector $c\in \mathbb{R}^{N^m}$ includes the nonzero
coefficients of the recovered signal $\mu^N$
\eqref{eq:numericalSolution} at nodes $x_k$ ,  $\|c\|_1
= \sum_{k=1}^{N^m} |c_k|$ and
$w\in\mathbb{R}^M$, i.e. $w =
          \{ f(z_j)+\varepsilon_j\}_{j=1,\ldots,M}$ is a sampled
        version of the observations $f$ on a set of measurement nodes
        $\{z_j\}_{j=1,\ldots,M}$ and $\varepsilon$ is the additive measurement noise.
\item Then, we update the computational domain.\\
   First, we remove the redundant elements (i.e. elements where
all their nodes correspond to zero entries in vector $c$).  We
define the new
domain $\Omega^m$ 
by estimating a set of disjoint clusters $\Omega_{\hat{l}}^m\equiv
(\mathcal{E}^m_{\hat{l}},\mathcal{N}^m_{\hat{l}})$ (groups of
adjacent elements) which comprises the remaining elements.
Hence, the update domain is
$\Omega^m=\bigcup_{\hat{l}=1}^{\hat{L}_m}\Omega_{\hat{L}}^m$ where
$\Omega^m_{\hat{l}}\bigcap_{\hat{l}=1}^{\hat{L}_m}=\emptyset$ and
$\hat{L}^m$ is the total number of formed clusters. 

  The mesh refinement is performed by including extra points/nodes in each
cluster $\Omega^m_{\hat{l}}$. 
The extra points/nodes  are at the centroids of the elements that comprise the clusters.
The centroids of very small elements are discarded.
The
choice of the centroids as extra nodes is based on the observation
that if $c_k\neq 0$ at node $x_k^m$, then the original peak should
be in the neighborhood of $x^m_k$ (stemming from the
analysis in section~\ref{sec:1Danalysis}). 
   For each cluster
$\Omega_{\hat{l}}^m$, a new set of  elements is estimated using the
updated set of nodes $\mathcal{N}^m_{\hat{l}}$ (old nodes and
centroids). Then, we repeat step 1, i.e. we solve
 problem (\ref{eq:mRecursionMinimizationProblem})
in the updated sets of nodes
$\mathcal{N}^{m}=\bigcup_{\hat{l}=1}^{\hat{L}^m}
\mathcal{N}^m_{\hat{l}}$.
\end{enumerate}

Steps 1-2 are repeated until  the computational support is not longer updated (the number of
nodes and elements stays fixed). This happens when the
distance between the nodes becomes small. As
a minimum distance (between two nodes) we can use a limit for peak separation
presented for some convolution kernels in
\cite{Poon2018,Poon2019}). Alternatively, prior information about the
expected size of the underlying peaks (e.g. in microscopy the sizes
of the molecules) can be considered.
Then, we recover as many peaks as the number of the disjoint
clusters
$\Omega_{\hat{l}}$, for $\hat{l}=1,\ldots,\hat{L}$
(where $\hat{L}$ is the total number of disjoint clusters estimated in the last iteration).
The amplitude denoted by $\hat{\gamma}_{\hat{l}}$ in cluster $\Omega_{\hat{l}}$  follows from equation~\eqref{eq:Peak_AmplitudeND} using the coefficients
entries of that cluster.  For the approximation of the
the peak location in cluster $\Omega_{\hat{l}}$, we first check the number of nonzero coefficients denoted by $N_{\hat{l}}$. If $N_{\hat{l}}>d$
(where $d$ is the dimensionality of the problem),
the peak location,
$\hat{\xi}_l$, is estimated by solving a linear system formed using the expression~\eqref{eq:PeakLoc_ND}.
Now, if $2\leq N_{\hat{l}}\leq d$ then the peak location can
be approximated with the help of linear basis functions,
$\phi_k$. If the approximated location is expressed as
$\hat{\xi}_{\hat{l}}=\sum_{k=1}^{N_{\hat{l}}}\phi_k x_k$
then by inserting the previous linear representation for
$\hat{\xi}_{\hat{l}}$ in expression~\eqref{eq:PeakLoc_ND}, we can obtain an
approximation for the peak location.
If $N_{\hat{l}}=1$, then $\hat{\xi}_{\hat{l}}$ equals to the value of the nonzero node (following from \eqref{eq:OptCond_TaylorExpansion}).

\section*{Acknowledgement}
AK was supported by the Academy of Finland Postdoctoral Researcher
program (No 316542).

\bibliographystyle{plain}
\bibliography{Run_eps_v2}

\end{document}